\shorttitle{Intrinsically Weighted Means of Marked Point Processes}
\DeclareMathOperator{\erg}{erg}
\DeclareMathOperator{\Var}{Var}
\DeclareMathOperator{\Cov}{Cov}
\DeclareMathOperator{\g}{g}
\DeclareMathOperator{\weighted}{wght}
\DeclareMathOperator{\condit}{cond}
\DeclareMathOperator{\rel}{rel}
\DeclareMathOperator{\geo}{geo}
\DeclareMathOperator{\dis}{dist}
\newcommand{\dd}{d}
\newcommand{\bw}{\mathbf w}
\newcommand{\Eb}{\mathbb E}
\newcommand{\M}{\mathbb M}
\newcommand{\Nb}{\mathbb N}
\renewcommand{\P}{\mathbb P}
\newcommand{\R}{\mathbb R}
\newcommand{\T}{\mathbf T}
\newcommand{\Z}{\mathbb Z}
\newcommand{\1}{\mathbf 1}
\newcommand{\0}{\mathbf 0}
\newcommand{\A}{\mathcal A}
\newcommand{\BB}{\mathcal B}
\newcommand{\LL}{\mathcal L}
\newcommand{\MM}{\mathcal M}
\newcommand{\NN}{\mathcal N}
\newcommand{\cP}{\mathcal P}
\newcommand{\cPerg}{\mathcal P_{\erg}}
\begin{document}

\title{Intrinsically Weighted Means\\ of Marked Point Processes} 

\authorone[University of G\"ottingen]{Alexander~Malinowski}
\authortwo[University Mannheim]{Martin~Schlather}
\authorthree[University of Wisconsin at Madison]{Zhengjun~Zhang}
\addressone{Institute for Mathematical Stochastics,
   University of G\"ottingen, Goldschmidtstr. 7, 
   37077 G\"ottingen, Germany,
   +49 551 39 172134, 
   malinows@math.uni-goettingen.de}
\addresstwo{Institute for Mathematics, University Mannheim, 
  68131 Mannheim, Germany,
  schlather@math.uni-mannheim.de}
\addressthree{Department of Statistics, University of Wisconsin at Madison,
  1300 University Avenue, Madison, WI 53706,
 zjz@stat.wisc.edu}

\date{\today}

\begin{abstract} 
  \noindent 
  For a non-stationary or non-ergodic marked point process (MPP) on
  $\R^d$, the definition of averages becomes ambiguous as the process
  might have a different stochastic behavior in different realizations
  (non-ergodicity) or in different areas of the observation window
  (non-stationarity). We investigate different definitions for the
  moments, including a new hierarchical definition for non-ergodic MPPs,
  and embed them into a family of weighted mean marks.  We point out
  examples of application in which different weighted mean marks all
  have a sensible meaning.  Further, asymptotic properties of the 
  corresponding estimators are investigated as well as optimal weighting 
  procedures.
\end{abstract}

\keywords{ergodic decomposition, hierarchical modeling, mark-location interaction, moment measure, non-ergodicity, weighted mean mark, regime-switching model}
\ams{60G55}{37A50; 37A30}
% 60G55 PointProcesses 
% 37A50 Dynamical systems and ergodic theory 
%       --> Relations with probability theory and stochastic processes 
% 37A30 Ergodic theorems, spectral theory, Markov operators 
%       For operator ergodic theory, see mainly 47A35
%%%%%%%%%%%%%%%%%%%%%%%%%%%%%%%%%%%%%%%%%%%%%%%%%%%%%%%%%%%%%%%%%%%%%%%%%%

\section{Introduction} 

Marked point processes (MPPs) provide an adequate framework for
modeling irregularly scattered events in space or time in that they
incorporate the joint distribution of the observed values and the
point locations (e.g.,
\cite{Daley2008,Diggle2010,Karr1991,Moller2003,Myllymaki2007,Schlather2004}).
Due to the variety of possible forms of dependence between marks and
locations in an MPP framework, already the notion of the mean, which
is usually considered as being the simplest summary statistic, rises
tantalizing and challenging questions.

An introductory example for the type of MPP averages being considered
within this paper is the trading process in financial
markets. Transactions of assets are typically characterized by the two
quantities price and volume; a benchmark quantity that is of major
interest especially for institutional investors is the so-called
volume-weighted average price (VWAP) (e.g.,
\cite{Bialkowski2008,Madhavan2002}). The VWAP of $n$ transactions with
prices $p_i$ and traded volumes $v_i$, $i=1,\ldots, n$, is defined as
$p_{\text{VWAP}}= \sum (p_i v_i) / \sum v_i$.\\ We embed this example
in the following general MPP framework: We consider stationary MPPs on
$\R^d$ of the form
\begin{align*}
  \Phi = \left\{ (t_i, y_i, z_i) : i\in\Nb \right\}, 
\end{align*}
where $t_i\in\R^d$ is the point location, $y_i\in\R$ is the first mark
and $z_i\in[0, \infty)$ is a second mark of the $i$th point of
  $\Phi$. Let $\Phi_{\g}=\{t:(t,y,z)\in\Phi\}$ denote the ground
  process of point locations of $\Phi$ and let us denote the marks at
  a location $t\in\Phi_{\g}$ by $y(t)$ and $z(t)$. The non-negativity
  assumption on the $z$-component simplifies technical assumptions
  when employing this mark component as weights for averages of the
  first mark component $y(t)$ or $f(y(t))$ for some function
  $f:\R\to\R$.  In intuitive notation we write the corresponding
  weighted mean as
\begin{align}
  \mu_f^{(1)}=\Eb[z(t) f(y(t)) \,|\, t\in\Phi_{\g}],
  \label{wmm1}
\end{align}
where we assume that the $z$-component is normalized such that
$\Eb[z(t) \,|\, t\in\Phi_{\g}]=1$. Here, the conditioning on
``$t\in\Phi_{\g}$'' is understood in the sense of the Palm mark
distribution.  Since the weights $z(t)$ are provided by the MPP itself
and may depend on both the marks $y(t)$ and the point locations
$t\in\Phi_{\g}$, we refer to $\mu_f^{(1)}$ as \emph{intrinsically
  weighted mean mark} of $\Phi$.  The formal definition of
$\mu_f^{(1)}$ and related quantities will be given at the beginning of
Section \ref{sec:methods}.

When a system of randomly distributed objects is modeled by means of
MPPs, there can exist different sensible choices of intrinsic weights
$z(t)$ leading to different weighted mean marks that are relevant for
one and the same process, but for different statistical questions:

\begin{itemize}
  \item \emph{Average height of trees:} Consider $n$ forests of about
    equal size, each of which is sampled on an area with fixed size
    and shape.  Then the unweighted average of the height of all trees
    provides a measure of the entire timber stand, which is relevant
    for forest inventory applications. This amounts to $z(t)=1$ in
    \eqref{wmm1}.  Additionally, the average height of a typical
    \emph{forest} (as opposed to a typical tree) might be of interest,
    independently of how dense the trees occur in the different
    forests. Then, a nested definition of mean seems to be adequate
    where we first average within each forest and then between all
    forests. This is equivalent to using a weighted average over all
    trees with $z(t)$ being proportional to the inverse of the number
    of trees in the forest that location $t$ belongs to.
  \item \emph{Density of insects on plants, cf.\ \cite{Begon1990}:}
    Consider $n$ plants and a population of insects distributed over
    the plants.  Let $k_i$, $i=1, \ldots, n$, be the number of insects
    on the $i$th plant. In this set-up there are different
    well-established definitions of density referring to different
    ecological effects. The ordinary density of insects, also called
    \emph{resource-weighted density}, is $(k_1+\ldots+k_n)/n$ and
    quantifies the average availability of resources. In contrast, the
    \emph{organism-weighted density} is the density that an average
    insect experiences. Each individual on plant $i$ experiences a
    density of $k_i$ insects per plant, i.e., the organism-weighted
    density is $(k_1^2+\ldots +k_n^2) / (k_1+\ldots +k_n)$. In MPP
    notation, each insect is represented by a point, marked by the
    total number of insects on the plant on which the insect is
    located. Then the organism-weighted density corresponds to the
    ordinary mean mark ($z(t)=1$), whereas the resource-weighted
    density is the average of all plant-wise averages of the marks,
    i.e., $z(t)=(nk_i)^{-1} \sum_{i=1}^n k_i$ if $t$ belongs to
    plant~$i$.
  \item \emph{Sampling of continuous-space processes:} Measurements of
    continuous-space or continuous-time processes usually aim at
    estimating or predicting the underlying process and the mean of
    interest is therefore the spatial or temporal mean over the whole
    domain of the process. Since measurement locations are not
    necessarily independent of the underlying process, knowledge of
    the pattern of point locations might already provide information
    about the values of the process.  Such a situation is commonly
    referred to as \emph{biased} or \emph{preferential sampling} and
    different weighting approaches exist to correct for this form of
    biases (e.g., \cite{Isaaks1989}).  Although most statistical
    methods only use stationarity, ergodicity is often implicitly
    assumed. In case of non-ergodicity, which means that different
    realizations can have a different stochastic behavior, we are
    faced with an additional dimension of biasedness: Within each
    ergodic subclass, the pattern of point locations can be
    independent of the underlying process, while there might be a
    strong dependence between the pattern of measurement locations and
    the process itself if multiple realizations are considered.  For a
    simple example, consider a Gaussian random field with a random
    mean $m$ combined with a Poisson point process of measurement
    locations whose intensity of points is a function of~$m$.
\end{itemize}

While ergodicity of MPPs is necessary for a straightforward
interpretation of the mark distribution as the distribution of a
typical point and, at least implicitly, is required by many
applications for consistent estimation, in this paper, we investigate
the behavior of moment-based summary statistics in case of non-ergodic
MPPs and intend to point out problems of ambiguity in this
context. When the different forests and plants in the above examples
are perceived as a set of MPP realizations and exhibit different
ecological characteristics, non-ergodicity has to be
included. Examples for non-ergodic MPPs that evolve in time can easily
be found in the financial world: For subsequent days of asset trading,
the process of executed transactions can be considered as different
realizations of a possibly non-ergodic MPP.  To treat non-ergodic MPPs
adequately, we propose intrinsically weighted mean marks as a special
case of \eqref{wmm1} in which the weights are constant within each
ergodicity class but allow for compensating for differences between
the different ergodicity classes.  A direct application of the theory
developed within this paper is \cite{Malinowskietal2012b}, in which
interaction effects within high-frequency financial data are
investigated via MPP methods.

The remainder of this article is organized as follows: In Section
\ref{sec:methods} we recall and generalize moment-based
characteristics for MPPs which also form the central tool for the
analysis of interactions in MPPs. We study their behavior and
interpretation for non-ergodic processes and, following the idea of
the above examples, propose alternative definitions of moment-based
summary statistics in Section \ref{sec:nonergodic_def}. Different
estimators for the above characteristics and their asymptotic
properties are discussed in Section \ref{sec:estimators}; the paper
closes with a comparison of the point process set-up with estimation
of continuous-space processes, which typically occur within
geostatistical applications.  The appendix reviews basic results
from ergodic theory and contains some of the proofs of Section
\ref{sec:estimators}.

%%%%%%%%%%%%%%%%%%%%%%%%%%%%%%%%%%%%%%%%%%%%%%%%%%%%%%%%%%%%%%%%%%%%%%%%%%

\section{MPP moment-measures and measurement of interaction effects}\label{sec:methods}
Throughout the paper $\Phi=\{(t_i, y_i, z_i): i\in\Nb\}$ is a
stationary and simple marked point process on $\R^d$ with marks
$(y(t_i), z(t_i)) = (y_i, z_i) \in \R\times[0, \infty)$, and
  $\Phi_{\g}=\{t: (t, y, z)\in\Phi\}$ is its ground process of point
  locations. In particular, the point configuration $\Phi_{\g}$ is
  locally finite.  For the general theory of point processes, the
  reader is referred to \cite{Daley2003,Daley2008,Stoyan1987}, for
  example.  Let us remark that the following definitions of MPP
  statistics can directly be generalized to MPPs on Polish spaces
  whose marks are also in a Polish space.

One of the most basic mark summary statistic is the weighted mean mark
$\mu_f^{(1)}$, which we introduced in \eqref{wmm1} as a conditional
mean, conditional on the event $\{t\in\Phi_{\g}\}$.  Since for fixed
$t\in\R^d$, this is a zero-probability event, the classical formal
definition is
\begin{align}
  \mu_f^{(1)} 
  = \frac{\Eb \sum_{(t, y, z)\in\Phi}z f(y) \1_B(t)}{
    \Eb \sum_{(t, y, z)\in\Phi} z\1_B(t)} 
  \label{mu_firstorder}
\end{align}
for any Borel set $B\subset\R^d$ with $|B|>0$. Here we implicitly
exclude the degenerate case $z(t)\equiv 0$. Due to the stationarity of
$\Phi$, this definition does not depend on the choice of $B$. 

\begin{prop}
Both definitions of $\mu_f^{(1)}$, \eqref{wmm1} and
\eqref{mu_firstorder}, coincide.
\end{prop}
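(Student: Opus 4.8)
The plan is to rewrite both the numerator and the denominator of \eqref{mu_firstorder} via the refined Campbell theorem (Campbell--Mecke formula) for stationary marked point processes, which is precisely the tool that makes the Palm-distribution interpretation of \eqref{wmm1} rigorous. Let $Q$ denote the Palm mark distribution of $\Phi$ on $\R\times[0,\infty)$ and let $(Y_0, Z_0)$ be a generic mark with distribution $Q$. By definition, the informal conditioning $\Eb[\,\cdot \mid t\in\Phi_{\g}]$ appearing in \eqref{wmm1} is integration against $Q$, so that \eqref{wmm1} reads $\mu_f^{(1)} = \Eb_Q[Z_0 f(Y_0)]$ and the normalization assumption becomes $\Eb_Q[Z_0]=1$.

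First I would apply the Campbell--Mecke formula to the integrand $h(t,(y,z)) = z f(y)\1_B(t)$ (splitting $f$ into positive and negative parts to reduce to the nonnegative case, which is harmless once $Z_0|f(Y_0)|$ is assumed $Q$-integrable). Writing $\lambda$ for the intensity of the ground process $\Phi_{\g}$, this gives
\begin{align*}
  \Eb \sum_{(t,y,z)\in\Phi} z f(y)\1_B(t)
  &= \lambda \int_{\R^d} \1_B(t)\,\Eb_Q[Z_0 f(Y_0)]\,dt \\
  &= \lambda\,|B|\,\Eb_Q[Z_0 f(Y_0)].
\end{align*}
The same computation with $f\equiv 1$ yields $\Eb \sum_{(t,y,z)\in\Phi} z\1_B(t) = \lambda\,|B|\,\Eb_Q[Z_0]$. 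Both expressions carry the common factor $\lambda\,|B|$, which is finite for bounded $B$ and, excluding the degenerate case $z\equiv 0$, strictly positive.

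Forming the quotient in \eqref{mu_firstorder}, the factor $\lambda\,|B|$ cancels, leaving $\mu_f^{(1)} = \Eb_Q[Z_0 f(Y_0)]/\Eb_Q[Z_0]$; as a by-product this reconfirms that the value does not depend on the choice of $B$. Invoking the normalization $\Eb_Q[Z_0]=1$ then collapses the denominator to $1$, so that \eqref{mu_firstorder} reduces to $\Eb_Q[Z_0 f(Y_0)]$, which is exactly \eqref{wmm1}.

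I expect the only genuine obstacle to be conceptual rather than computational: one must pin down precisely what \eqref{wmm1} means, that is, identify the heuristic conditioning on the null event $\{t\in\Phi_{\g}\}$ with integration against the Palm mark distribution $Q$. Once that identification is fixed, the Campbell--Mecke formula does all the work and the remaining steps are the routine factorization and cancellation above; the only side conditions required are the $Q$-integrability of $Z_0 f(Y_0)$ and the non-degeneracy $\Eb_Q[Z_0]>0$ that makes the quotient well defined.
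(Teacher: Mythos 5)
Your proof is correct and is essentially the paper's own argument written out in full: the paper simply defers to ``standard arguments of MPP theory'' (citing Daley and Vere-Jones, chap.~13), and those standard arguments are precisely the refined Campbell theorem computation you perform, identifying the heuristic conditioning in \eqref{wmm1} with the Palm mark distribution and cancelling the common factor $\lambda\,|B|$.
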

\begin{proof}
The assertion follows from standard arguments of MPP theory  
\cite[chap.~13]{Daley2008}.
\end{proof}

The most relevant example of $f$ in practical application is $f(y) =
y^n$ for $n=1, 2, \ldots$ Then, if $z(t)=1$ for $t\in\Phi_{\g}$,
$\mu_f^{(1)}$ simply represents the $n$-th moment of the (Palm) mark
distribution.  Note that in case the MPP represents measurements of an
underlying continuous process, the mean mark can substantially differ
from the mean of the underlying process due to stochastic dependence
between the sampling locations and the process itself.

While the above statistic $\mu_f^{(1)}$ reflects (average) properties
of single points, second-order characteristics (in intuitive
notation $\Eb[f(y(t_1), y(t_2)) \,|\, t_1, t_2\in\Phi_{\g}, t_1\neq
  t_2]$) provide a framework to investigate dependency structures
within MPPs.  We use the superscripts $^{(1)}$ and $^{(2)}$ to
indicate whether first- or second-order measures are meant.

\begin{defn}
For any non-negative function $f$ on $\R\times\R$, we define 
a $\sigma$-finite measure on $\R^d\times\R^d$ by 
\begin{align}
  \alpha_f^{(2)}(C)=\Eb\sum_{(t_1, y_1, z_1), (t_2, y_2, z_2)\in\Phi}^{\neq}
  z_1 f(y_1, y_2)\1_C((t_1, t_2)), \quad C\in\BB(\R^d\times\R^d), 
  \label{alpha_secondorder2}
\end{align}
which we call \emph{weighted second moment measure}.
Here, ``$\neq$'' indicates that the sum runs over all pairs of
points with $(t_1, y_1) \neq (t_2, y_2)$.
\end{defn}
With the notation
\begin{align*}
C(B, I)&= \begin{cases}
  \big\{(t_1, t_2) : t_1\in B,\, t_2\in t_1+I \big\}, & d=1,\\
  \big\{(t_1, t_2) : t_1\in B,\, t_2\in t_1+\{x\in\R^d : \|x\|\in I\} 
\big\}, & d>1,
\end{cases}\\
C(t, I)&= C([\mathbf 0, t], I), \\
C(I) &= C([\mathbf 0, \mathbf 1], I),
\end{align*}
for $B\in\BB(\R^d)$, $t\in\R^d$, $I\in\BB(\R)$, 
\begin{align}
\alpha_f^{(2)}(C(I)),\quad I\in\BB(\R),\label{alpha_secondorder3}
\end{align}
defines a $\sigma$-finite measure on $\R$.  Well-known examples of
second-order mark characteristics for stationary and isotropic MPPs
are Cressie's mark variogram and covariance function
\cite{Cressie1993}, Stoyan's $k_{mm}$-function \cite{Stoyan1984a}, and
Isham's mark correlation function \cite{Isham1985}, which can all be
expressed in terms of \eqref{alpha_secondorder2} or
\eqref{alpha_secondorder3} with a constant $z$-component.
\cite{Schlather2001} provides a unifying notation for the above
characteristics and further introduces new functions, $E$ and $V$,
where $E(r)$ and $V(r)$ represent the mean and variance of a mark,
respectively, given that there exists a further point at distance
$r>0$. For the one-dimensional case, e.g., for temporal processes,
\cite{Malinowski2010} extend those characteristics to the
non-isotropic set-up, where a negative value of $r$ means that the
point that is conditioned on is in the past.  The above second-order
characteristics only involve the three functions $f(y_1, y_2) =
y_1y_2$, $f(y_1, y_2) = y_1$ and $f(y_1, y_2) = y_1^2$.
\begin{defn}[cf.\ \cite{Schlather2001}]
For a general non-negative function $f$ on $\R\times\R$, we define
\begin{align}
\mu_f^{(2)}(I)= \frac{\alpha_f^{(2)}(C(I))}{\alpha^{(2)}(C(I))}, 
\quad I\in\BB(\R),
\label{mu_f_discr}
\end{align}
if $\alpha^{(2)}(C(I)) >0$. Here, 
$\alpha^{(2)}$ is short notation for $\alpha_f^{(2)}$ with $f\equiv 1$. 
We call $\mu_f^{(2)}$ the \emph{(weighted) second-order mean mark}.
\end{defn}
In the following, we always assume that $I$ is chosen such that
$\alpha^{(2)}(C(I)) >0$.  Note that the distinction between $d=1$ and
$d>1$ in the definition of the set $C(B, I)$ allows to capture a
possibly anisotropic behavior of $\mu_f^{(2)}$ in the one-dimensional
case. In particular,
\begin{align*}
\alpha_f^{(2)}(C(I))=
\begin{cases} 
  \Eb_\Phi \sum^{\neq}_{(t_1, y_1, z_1), (t_2, y_2, z_2) \in\Phi,\ t_1\in [0,\, 1]} 
    z_1 f(y_1, y_2) \1_{t_2 - t_1 \in I}, & d=1\\
  \Eb_\Phi \sum^{\neq}_{(t_1, y_1, z_1), (t_2, y_2, z_2) \in\Phi,\ t_1\in [0,\, 1]} 
    z_1 f(y_1, y_2) \1_{\|t_2 - t_1\| \in I}, & d>1 .
\end{cases}
\end{align*}
For higher dimensions, it is also possible to assign different
directions of isotropy, but the technical burden increases
considerably as $\mu_f^{(2)}$ will not be a function of a scalar
argument anymore.  For further notational convenience, we assume that
the derivative of $\alpha_f^{(2)}$ w.r.t.\ the Lebesgue measure
exists, which is then referred to as \emph{product density} and
denoted by $\rho_f^{(2)}$.

Due to the stationarity of $\Phi$, we have $\rho_f^{(2)}(t_1, t_2) =
\rho_f^{(2)}(0, t_2-t_1)$ for almost all $(t_1, t_2)\in\R^{2d}$ and
hence $\alpha_f^{(2)}(C) = \int_{C} \rho_f^{(2)}(0, h_2-h_1) \dd
(h_1\times h_2)$, $C\in\BB(\R^d\times\R^d)$.  Let $\rho_f^{C,
  (2)}(r)$, $r\in\R$, denote the derivative of
$\alpha_f^{(2)}(C(\cdot))$ w.r.t.\ the \emph{one-dimensional} Lebesgue
measure.  Obviously, $\alpha_f^{(2)}(C(\cdot))$ is dominated by
$\alpha^{(2)}(C(\cdot))$, which ensures that the limit of
$\mu_f^{(2)}(I)$ for $|I|\to 0$ exists and can be expressed in terms
of Radon-Nikodym derivatives.  For $r\neq 0$ we define
\begin{align}
  \mu_f^{(2)}(r)= \left.
  \frac{
     \partial \alpha_f^{(2)}(C(\cdot)) 
     }{
     \partial \alpha^{(2)}(C(\cdot)) }\right|_{\cdot=r}
  = \frac{\rho_f^{C, (2)}(r)}{\rho_1^{C, (2)}(r)}.
  \label{mu_f_cont}
\end{align}
Note that for $d=1$, we have $\rho_f^{C, (2)}(r)=\rho_f^{(2)}(0, r)$.
With a slight abuse of notation, we refer to both definitions
(\ref{mu_f_discr}) and (\ref{mu_f_cont}) as $\mu_f^{(2)}$.  For $r\neq
0$ and $f$ only depending on its first argument, $\mu_f^{(2)}(r)$ can
be interpreted as the (weighted) expectation of a mark at location $t$
subject to the conditioning that $\Phi$ has a point at location $t$
and at location $t+r\mathbf e_1$, i.e., \hbox{$\mu_f^{(2)}(r) =
  \Eb[z(t) f(y(t)) \,|\, t, t+r\mathbf e_1\in\Phi_{g}]$}, where
$\mathbf e_1$ denotes the vector $(1, 0, \ldots, 0)^T\in\R^d$. For
$\mu_f^{(2)}(I)$, this interpretation becomes slightly ambiguous:
Considering an event at time $t$, there may be multiple other points
located within the set $t+I$ and in case that interactions of higher
order are present, these will be reflected by the second-order
statistic $\mu_f^{(2)}(I)$ as well. More precisely, by the definitions
in (\ref{mu_f_discr}) and (\ref{mu_f_cont}),
\begin{align}
  \mu_f^{(2)}(I) = \alpha^{(2)}(C(I))^{-1} \int_I \mu_f^{(2)}(r)
  \ \dd \alpha^{(2)}(C(r)) ,
  \label{mu_f_discr_smoothed}
\end{align}
i.e., $\mu_f^{(2)}(I)$ is a weighted average of conditional
expectations $\mu_f^{(2)}(r)$ with weights being proportional to 
the expected number of pairs of points with  distance  $\dd r$.

\begin{rem}\label{rem:generalizations}
\begin{enumerate}[(a)]
\item 
The extension to moment measures of higher order is 
straightforward and allows to condition on arbitrary point
constellations. In practice, however, mostly first- and second-order
statistics are considered.
\item 
The non-negativity condition on $f$ can be weakened by considering 
the restriction of $\mu_f^{(2)}(\cdot)$ to some bounded set $J\in\BB(\R)$.  
Then it is sufficient for $f$ that $\alpha_h^{(2)}(C(J)) <\infty$ is
satisfied for $h=f_+=\max\{f, 0\}$ \emph{or} for $h=f_-=-\min\{f, 0\}$.
\item 
Another generalization allows to include further conditioning on the
marks.  For $f_{\condit}$  a
non-negative function on $\R\times\R$ we consider 
\begin{align}
\mu_{f,\, f_{\condit}}^{(2)}(I)= 
\frac{\alpha_{f\cdot f_{\condit}}^{(2)}(C(I))}{\alpha_{f_{\condit}}^{(2)}(C(I))}
=\frac{\mu_{f\cdot f_{\condit}}^{(2)}(I)}{\mu_{f_{\condit}}^{(2)}(I)}.\label{mu_f_cond}
\end{align}
Choosing $f_{\condit}$ to be an indicator function
$f_{\condit}(y_1, y_2) = \1_A(y_1)\1_B(y_2)$ conditions the marks on
the events $A$ and $B$, respectively.
\end{enumerate}
\end{rem}

\begin{rem}
For $d>1$, $\mu_f^{(2)}$ is a function of the Euclidean distance
between two points, whereas for $d=1$, $\mu_f^{(2)}$ is a function of
the signed distance. In the latter case, $\mu_f^{(2)}(\cdot)$ is
in general not symmetric: Consider a temporal
process consisting of pairs of points $(t_1, t_2)$ with $t_1<t_2$
and with small intra- but large
inter-pair distances. Assume that the marks of different pairs are
stochastically independent and that for each pair of points, 
$f(y_1, y_2)>f(y_2, y_1)$ holds.
Then $\mu_f^{(2)}(r) > \mu_f^{(2)}(-r)$
holds for all $r>0$ that are small enough and that can occur as
intra-pair distances.
\end{rem}

For notational convenience, we will write $\mu_f^{(i)}$ to indicate that
a statement is valid for $\mu_f^{(1)}$ and $\mu_f^{(2)}$.

\section{New moment measures for non-ergodic MPPs}
\label{sec:nonergodic_def}
Ergodicity makes spatial averages over suitably increasing observation
windows of a single realization converge to the corresponding
expectation over the state space: $$|W|^{-1} \int_{W} X(T_x\Phi) \,\dd
x \stackrel{\text{a.s.}}{\longrightarrow} \Eb(X(\Phi)), \quad \text{
for } |W|\to\infty \text{ suitably},$$ for any integrable function $X$
on the space of all locally finite counting measures. Here, $T_x$
denotes the shift of the whole random point pattern $\Phi$ by
$x\in\R^d$.  In essence, ergodicity enables consistent estimation of MPP
moment measures by observing a single realization on a suitably
increasing domain.  In this section, though, we consider the opposite
situation, namely where $\Phi$ is a non-ergodic process.

\medskip

The following proposition directly relates to the fact that a
non-ergodic MPP can be seen as hierarchical model, which, in a first
step, draws an ergodic \emph{source of randomness} out of which the
final realization is drawn in a second step.
\begin{prop} \label{decomposition_mu}
Let $\Phi$ be a non-ergodic MPP with probability law $P$. By $\M_0$
and $\MM_0$ we denote the space of all locally finite counting
measures on $\R^d\times\R\times[0,\infty)$ and the usual
  $\sigma$-algebra, respectively.  (See Appendix \ref{ergodic_theory}
  for more details.)  Then
  \begin{align}
  \mu_f^{(1)} = \frac{
    \Eb_Q \left[ \mu_{f, \Phi | Q}^{(1)} \cdot \alpha_{\Phi|Q}^{(1)}(B) \right]
  }{ \alpha^{(1)}(B) } , \quad 
  \mu_f^{(2)}(\cdot) = \frac{
    \Eb_Q \left[ \mu_{f, \Phi | Q}^{(2)}(\cdot) 
    \alpha_{\Phi|Q}^{(2)}(C(\cdot)) \right]
  }{ \alpha^{(2)}(C(\cdot)) }, 
  \label{mu_f_summarize}
  \end{align}
where $Q\sim\lambda$ is a random variable with values in the space
$\cPerg$ of all ergodic MPP probability laws, distributed according to
some probability measure $\lambda$, such that
$P(M)=\int_{\cPerg}Q^*(M) \lambda(d Q^*)$, $M\in\MM_0$.  If
$\mu_f^{(2)}$ is evaluated for a fixed distance $r\in\R$,
$\alpha^{(2)}(C(r))$ has to be replaced by $\rho_{1}^{C, (2)}(r)$ 
in \eqref{mu_f_summarize}.
\end{prop}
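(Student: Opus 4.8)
The plan is to apply the ergodic decomposition termwise to the numerator of each ratio defining $\mu_f^{(i)}$ and then to re-express the resulting mixture over $\cPerg$ through the per-class quantities $\mu_{f, \Phi | Q}^{(i)}$ and $\alpha_{\Phi|Q}^{(i)}$. Write $\alpha_f^{(1)}(B) = \Eb \sum_{(t, y, z)\in\Phi} z f(y)\1_B(t)$ for the numerator in \eqref{mu_firstorder}, so that $\mu_f^{(1)} = \alpha_f^{(1)}(B)/\alpha^{(1)}(B)$ with $\alpha^{(1)}$ the same measure for $f\equiv 1$.

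First I would treat the first-order case. The functional $X(\Phi) = \sum_{(t, y, z)\in\Phi} z f(y)\1_B(t)$ is non-negative and measurable on $(\M_0, \MM_0)$, so feeding the mixture $P(M) = \int_{\cPerg} Q^*(M)\,\lambda(d Q^*)$ into $\Eb_P[X]$ and applying Tonelli gives
\[
  \alpha_f^{(1)}(B) = \int_{\cPerg} \Eb_{Q^*}[X(\Phi)]\,\lambda(d Q^*) = \Eb_Q\big[\alpha_{f, \Phi|Q}^{(1)}(B)\big],
\]
where $\alpha_{f, \Phi|Q}^{(1)}(B)$ is the weighted first moment measure under the ergodic law $Q$. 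Since each $Q\in\cPerg$ is itself a stationary MPP law, \eqref{mu_firstorder} applies within the class and factorizes $\alpha_{f, \Phi|Q}^{(1)}(B) = \mu_{f, \Phi | Q}^{(1)}\cdot\alpha_{\Phi|Q}^{(1)}(B)$; substituting and dividing by $\alpha^{(1)}(B)$ yields the first identity in \eqref{mu_f_summarize}.

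The second-order statement follows from the identical three steps applied to $\alpha_f^{(2)}(C(I))$ of \eqref{alpha_secondorder2}: decompose over $\lambda$ to get $\alpha_f^{(2)}(C(I)) = \Eb_Q[\alpha_{f, \Phi|Q}^{(2)}(C(I))]$, factorize $\alpha_{f, \Phi|Q}^{(2)}(C(I)) = \mu_{f, \Phi | Q}^{(2)}(I)\,\alpha_{\Phi|Q}^{(2)}(C(I))$ via \eqref{mu_f_discr}, and divide by $\alpha^{(2)}(C(I))$. For a fixed distance $r\neq 0$ I would instead differentiate the decomposed measure $\alpha_f^{(2)}(C(\cdot)) = \Eb_Q[\alpha_{f, \Phi|Q}^{(2)}(C(\cdot))]$ with respect to the one-dimensional Lebesgue measure; exchanging the Radon--Nikodym derivative with the $\lambda$-integration and using \eqref{mu_f_cont} class-wise gives $\rho_f^{C, (2)}(r) = \Eb_Q[\mu_{f, \Phi | Q}^{(2)}(r)\,\rho_{1, \Phi|Q}^{C, (2)}(r)]$, so that dividing by $\rho_{1}^{C, (2)}(r)$ in place of $\alpha^{(2)}(C(r))$ produces the stated form.

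The main obstacle will be the rigorous justification of these two interchanges. For the numerators, the identity $\Eb_P[X] = \int_{\cPerg}\Eb_{Q^*}[X]\,\lambda(d Q^*)$ presupposes that $Q^*\mapsto\Eb_{Q^*}[X]$ is $\lambda$-measurable (which holds by a monotone-class argument from the construction of the mixture) and is then licensed by Tonelli precisely because $f$ and the weights $z$ are non-negative; this is where the standing sign assumptions enter. For the pointwise second-order formula the delicate point is exchanging differentiation in $r$ with the integral over $\cPerg$, which I would secure under the standing assumption that the product densities exist, either by a dominated-convergence argument on the difference quotients of $\alpha_f^{(2)}(C(\cdot))$ or by invoking the $\lambda$-a.s.\ uniqueness of Radon--Nikodym derivatives together with Fubini.
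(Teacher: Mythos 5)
\begin{quote}
Your proof is correct and follows essentially the same route as the paper's: both decompose the moment measures as $\alpha_f^{(i)} = \Eb_Q\,\alpha_{f,\Phi|Q}^{(i)}$ via the ergodic decomposition theorem, factorize classwise using the per-class definition of $\mu_{f,\Phi|Q}^{(i)}$, and for fixed $r$ pass to Lebesgue densities by exchanging $\Eb_Q$ with the Radon--Nikodym differentiation. The paper presents only the fixed-$r$ computation and declares the other cases analogous, whereas you spell out the first-order and interval cases and flag the measurability/interchange justifications more explicitly, but the argument is the same.
\end{quote}
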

\begin{proof}
The ergodic decomposition theorem (cf.\ Theorem \ref{decomp_theorem})
guarantees the existence and uniqueness of a decomposition
$P(\cdot)=\int_{\cPerg}Q^*(\cdot) \lambda(\dd Q^*)$ and a corresponding
mixing random variable $Q\sim\lambda$.  Conditioning $\Phi$ on $Q$, we
can decompose the moment measures $\alpha_f^{(i)}$ and obtain
\begin{align}
  \mu_f^{(2)}(r)
  &= \left. \frac{
    \partial \Eb_Q \alpha_{f, \Phi|Q}^{(2)}(C(\cdot))
  }{
    \partial \alpha^{(2)}(C(\cdot))
  }\right|_{\cdot=r}  
  = \frac{
    \partial \Eb_Q \alpha_{f, \Phi|Q}^{(2)}(C(\cdot)) 
    \big/ \partial \nu(\cdot) \Big|_{\cdot=r}
  }{
    \partial \alpha^{(2)}(C(\cdot))
    \big/ \partial \nu(\cdot) \Big|_{\cdot=r}
  } \notag\\[.3em]
  &= \frac{
     \Eb_Q \rho_{f, \Phi|Q}^{C, (2)}(0, r)
  }{
    \rho_1^{C, (2)}(0, r)
  } 
  = \frac{\Eb_Q\bigl[  \mu_{f, \Phi | Q}^{(2)}(r) \cdot  
      \rho_{1, \Phi|Q}^{C, (2)}(0, r)\bigr]
  }{ \rho_1^{C, (2)}(0, r) }, 
  \notag
\end{align}
where $\nu$ denotes the Lebesgue measure.
For $\mu_f^{(2)}(I)$ and $\mu_f^{(1)}$, the decomposition is analogous.
\end{proof}

\begin{ex}
The so-called log-Gaussian Cox process \cite{Moller1998}
is ergodic if and only if the
underlying stationary Gaussian random field $Z$ is
ergodic.
A sufficient condition for $Z$ being ergodic is that the covariance
function decays to zero.  Amongst others,
\cite{Diggle2010} and \cite{Myllymaki2007} 
use log-Gaussian Cox
processes, combined with an intensity-dependent marking, as parametric
models for preferential sampling applications.
\end{ex}

Proposition \ref{decomposition_mu} shows that in case of
non-ergodicity, $\mu_f^{(i)}$ is an average of its ergodic subclasses
counterparts, in which each class $Q^*$ is implicitly weighted by the
respective intensity $\alpha_{\Phi|Q=Q^*}^{(i)}$.  If all ergodic
subprocesses $[\Phi|Q=Q^*]$ have the same intensity measure, the
weights cancel out and we have \hbox{$\mu_f^{(i)}=\Eb_Q \mu_{f, \Phi |
    Q}^{(i)}$}.  Since in the general case, a single ergodicity class
with low probability may exhibit a large value of
$\alpha_{\Phi|Q=Q^*}^{(i)}$ and thus drive the value of $\mu_f^{(i)}$,
the demand for a new characteristic $\tilde \mu_f^{(i)}$ arises
naturally, that summarizes the properties of all ergodicity classes
irrespectively of how the processes of point locations differ between
the different ergodicity classes.  We meet these requirements by a
definition that excludes the implicit weighting proportional to the
$i$th order intensities:
\begin{defn}\label{def:mu_non-erg}
Let $\lambda$ and $Q$ be the ergodic decomposition mixture measure and
mixture variable, respectively, of $\Phi$, and let $\Eb_Q \big|\mu_{f,
  \Phi | Q}^{(i)}\big| < \infty$. Then we call
\begin{align}
  \tilde\mu_f^{(i)} &= \Eb_Q \mu_{f, \Phi | Q}^{(i)} 
  =\int_{\cPerg} \mu_{f, \Phi | Q=Q^*}^{(i)}\  \lambda(\dd Q^*).
  \label{mu_f_nonergodic}
\end{align}
the \emph{(equally-weighted) average $i$th-order mean mark} of $\Phi$.
\end{defn}
Relating to the introductory forest example, the classical definition
of the mean mark in \eqref{mu_firstorder} corresponds to the average
height of all trees, irrespectively of differences w.r.t.\ the tree
densities between the different forests, while the new definition in
\eqref{mu_f_nonergodic} refers to the average height of a typical
forest.

\begin{rem}
Comparing the new definition with (\ref{mu_f_summarize}) yields that
$\tilde\mu_f^{(i)}$ coincides with $\mu_f^{(i)}$ if
$\alpha_{\Phi|Q}^{(i)}$ is $\lambda$-a.s.\ constant.  This is
particularly the case if $\Phi$ is ergodic.
  \label{rem:const_alpha}
\end{rem}

\begin{lem}\label{lem:decomp_mu_tilde}
For any $I\in\BB(\R)$ we have
\begin{align*} 
\tilde\mu_f^{(2)}(I)
  &= \Eb_Q \left[\alpha^{(2)}_{\Phi | Q}(C(I))^{-1} \int_I \mu_{f, \Phi
  | Q}^{(2)}(r) \ \dd \alpha_{\Phi | Q}^{(2)}(C(r)) \right].
\end{align*}
If, for $\lambda$-almost all
  measures $Q^*$, $\mu_{f, \Phi | Q=Q^*}^{(2)}(r)$ is uniformly
  bounded by some positive constant $c(Q^*)$ and $\Eb_Q c(Q) <
  \infty$, for $I\in\BB(\R)$ and $r\in\R$, we have 
  \begin{align*}
  \lim_{I\to\{r\}} \tilde\mu_f^{(2)}(I) = \tilde\mu_f^{(2)}(r).
  \end{align*}
\end{lem}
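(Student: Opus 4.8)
The plan is to obtain the first identity by applying the smoothing representation \eqref{mu_f_discr_smoothed} within each ergodicity class and then averaging over $Q$, and to obtain the limit statement by a dominated-convergence argument along the fibers of the ergodic decomposition. For the first claim I would start from Definition \ref{def:mu_non-erg}, which gives $\tilde\mu_f^{(2)}(I) = \Eb_Q \mu_{f, \Phi|Q}^{(2)}(I)$. Each conditional law $[\Phi \mid Q = Q^*]$ is itself an ergodic, hence stationary, MPP, so \eqref{mu_f_discr_smoothed} applies to it verbatim:
\[
\mu_{f, \Phi|Q=Q^*}^{(2)}(I) = \alpha^{(2)}_{\Phi|Q=Q^*}(C(I))^{-1}\int_I \mu_{f, \Phi|Q=Q^*}^{(2)}(r)\, \dd\alpha_{\Phi|Q=Q^*}^{(2)}(C(r)).
\]
Since this holds pointwise in $Q^*$, taking $\Eb_Q$ of both sides (legitimate by the integrability $\Eb_Q|\mu_{f,\Phi|Q}^{(2)}| < \infty$ imposed in Definition \ref{def:mu_non-erg}) immediately yields the asserted expression for $\tilde\mu_f^{(2)}(I)$; no interchange of limit and integral is needed here.

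For the limit statement the heart of the matter is to commute the limit $I \to \{r\}$ with the outer expectation $\Eb_Q$, and I would separate this into a fiberwise convergence step and a domination step. For convergence, the discussion preceding Remark \ref{rem:generalizations} guarantees, for $\lambda$-almost every $Q^*$, that the limit of $\mu_{f,\Phi|Q=Q^*}^{(2)}(I)$ as $|I|\to 0$ exists and equals the Radon--Nikodym value $\mu_{f,\Phi|Q=Q^*}^{(2)}(r)$ of \eqref{mu_f_cont}, obtained by differentiating $\alpha_{f,\Phi|Q=Q^*}^{(2)}(C(\cdot))$ against $\alpha_{\Phi|Q=Q^*}^{(2)}(C(\cdot))$ along intervals $I \ni r$ with vanishing length. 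For domination, the representation just established exhibits $\mu_{f,\Phi|Q=Q^*}^{(2)}(I)$ as an average, with weights $\dd\alpha_{\Phi|Q=Q^*}^{(2)}(C(r))/\alpha^{(2)}_{\Phi|Q=Q^*}(C(I))$ integrating to one, of the values $\mu_{f,\Phi|Q=Q^*}^{(2)}(s)$, $s\in I$; hence the hypothesised uniform bound $|\mu_{f,\Phi|Q=Q^*}^{(2)}(s)| \le c(Q^*)$ passes to the average and gives $|\mu_{f,\Phi|Q=Q^*}^{(2)}(I)| \le c(Q^*)$ uniformly in $I$.

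With $\Eb_Q c(Q) < \infty$, the dominated convergence theorem then delivers
\[
\lim_{I\to\{r\}}\tilde\mu_f^{(2)}(I) = \Eb_Q \lim_{I\to\{r\}}\mu_{f,\Phi|Q}^{(2)}(I) = \Eb_Q \mu_{f,\Phi|Q}^{(2)}(r) = \tilde\mu_f^{(2)}(r),
\]
the last equality again by Definition \ref{def:mu_non-erg}. The main obstacle is the fiberwise differentiation step: one must ensure that, for $\lambda$-almost all $Q^*$, the absolute continuity $\alpha_{f,\Phi|Q=Q^*}^{(2)}(C(\cdot)) \ll \alpha_{\Phi|Q=Q^*}^{(2)}(C(\cdot))$ holds and that the ratio converges at the fixed $r$ along the shrinking family $I \to \{r\}$ — this is where the assumed existence of the product densities $\rho^{C,(2)}$, the regularity of the shrinking intervals, and the restriction to Lebesgue points of the densities enter. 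Once this and the measurability in $Q^*$ of the maps $Q^* \mapsto \mu_{f,\Phi|Q=Q^*}^{(2)}(I)$ (furnished by the ergodic decomposition) are in place, the passage to the limit under $\Eb_Q$ is routine.
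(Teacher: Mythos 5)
Your proposal is correct and follows essentially the same route as the paper: the first identity is obtained by applying the smoothing representation \eqref{mu_f_discr_smoothed} fiberwise to the ergodic subprocesses $[\Phi\mid Q=Q^*]$ and averaging over $Q$, and the limit statement is the dominated convergence theorem, with the fiberwise convergence $\lim_{I\to\{r\}}\mu_{f,\Phi|Q=Q^*}^{(2)}(I)=\mu_{f,\Phi|Q=Q^*}^{(2)}(r)$ holding by construction of \eqref{mu_f_cont} and the dominating function $c(Q^*)$ passing through the weighted average exactly as you argue. Your additional remarks on measurability in $Q^*$ and on the fiberwise Radon--Nikodym differentiation are more explicit than the paper's one-line proof but do not change the argument.
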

\begin{proof}
The first assertion follows directly from applying the representation
\eqref{mu_f_discr_smoothed} to the ergodic subprocesses $[\Phi |
  Q=Q^*]$.  Since $\lim_{I\to\{r\}}\mu_f^{(2)}(I) = \mu_f^{(2)}(r)$ by
construction, the second assertion is merely an application of Lebesgue's
dominated convergence theorem.
\end{proof}

From Lemma \ref{lem:decomp_mu_tilde} we see that the nested conditional mean
$\tilde\mu_f^{(2)}(r)$ is a Radon-Nikodym derivative of
$\alpha_f^{(2)}(C(\cdot))$ w.r.t.\ $\alpha^{(2)}(C(\cdot))$ if and only if
the expectation of $\alpha^{(2)}_{\Phi | Q}(C(\cdot))\mu_{f, \Phi |
  Q}^{(2)}(\cdot)$ factorizes. This contrasts the
ordinary conditional mean $\mu_f^{(2)}(r)$, which is already defined
as a Radon-Nikodym derivative of $\alpha_f^{(2)}(C(\cdot))$
w.r.t.\ $\alpha^{(2)}(C(\cdot))$.

\medskip

The ergodic decomposition and an analog to Definition
\ref{def:mu_non-erg} can be applied to any expectation-based
functional of an MPP including the Palm mark distribution
itself. While the classical definition of the mean mark represents a
typical point, irrespectively of the different ergodicity classes, the
two-stage-expectation $\tilde\mu_f^{(i)}$ refers to the mean of a
typical realization.  We provide more details on the meaning of the
differences between $\mu_f^{(i)}$ and $\tilde \mu_f^{(i)}$ and between
different estimators in the next section.

\section{Estimation principles for the new MPP moment-measures}
\label{sec:estimators}

\subsection{The ergodic case}
For ergodic processes $\Phi$, the pointwise ergodic theorem for MPPs
(Proposition \ref{cor_averaging2} in the Appendix) yields that
\begin{align*}
& \Eb \left[ \sum^{\neq}_{(t_1, y_1, z_1), (t_2, y_2, z_2) \in\Phi} 
z_1 f(y_1, y_2) \1_{(t_1, t_2)\in C(I)}\right]\\
&\qquad\qquad
= \displaystyle \lim_{n\to\infty} \left[ n^{-d} 
  \sum^{\neq}_{(t_1, y_1, z_1), (t_2, y_2, z_2) \in\varphi} 
  z_1 f(y_1, y_2) \1_{(t_1, t_2)\in C(n\1, I)} \right]
\end{align*}
for almost all realizations $\varphi$ of $\Phi$, which builds the basis for 
the estimators being discussed in this section. 
For readability reasons, and since we will be only dealing with
second-order statistics from now on, we drop the superscript $^{(2)}$
in all the estimators of $\mu_f^{(2)}$.

Applying the standard estimator for MPP moment measures to a
realization of $\Phi$ observed on the set $[\0, \T]$, $\T\in(0, \infty)^d$, 
we obtain
\begin{align}
  \hat\mu_f(I, \Phi, \T)= 
  \frac{\hat\alpha_{f}(I, \Phi, \T)}{\hat\alpha_{1}(I, \Phi, \T)},
  \label{est_0}
\end{align}
where $\hat\alpha_{f}(I, \Phi, \T) = 
\sum^{\neq}_{(t_1, y_1, z_1), (t_2, y_2, z_2) \in\Phi} 
    z_1 f(y_1, y_2) \1_{(t_1, t_2)\in C(\T, I)}$.
\begin{lem}\label{lem_consistency} 
If $\Phi$ is ergodic, $\hat\mu_f(I, \Phi, \T)$ is consistent
for $\mu_f^{(2)}(I)$. Here, ``$\T\to\infty$'' is understood
componentwise. If $\Phi$ is non-ergodic, $\hat\mu_f(I, \Phi, \T)$ is 
consistent if and only if $\mu_{f, \Phi|Q=Q^*}^{(2)}(I)$ is constant w.r.t.\ 
$Q^*$.
\end{lem}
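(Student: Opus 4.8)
The plan is to treat the numerator and denominator of $\hat\mu_f(I,\Phi,\T)$ separately via the pointwise ergodic theorem, pass to their quotient, and then obtain the non-ergodic case by conditioning on the mixing variable $Q$. First I would dispose of the ergodic case: normalizing both $\hat\alpha_f(I,\Phi,\T)$ and $\hat\alpha_1(I,\Phi,\T)$ by the volume $|[\0,\T]|$ of the observation window and invoking Proposition~\ref{cor_averaging2} along $\T\to\infty$ (componentwise) gives
\begin{align*}
|[\0,\T]|^{-1}\hat\alpha_f(I,\Phi,\T)\longrightarrow\alpha_f^{(2)}(C(I)),
\qquad
|[\0,\T]|^{-1}\hat\alpha_1(I,\Phi,\T)\longrightarrow\alpha^{(2)}(C(I))
\end{align*}
for almost all realizations. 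Since $\alpha^{(2)}(C(I))>0$ is assumed throughout, forming the quotient cancels the normalization and yields $\hat\mu_f(I,\Phi,\T)\to\mu_f^{(2)}(I)$ almost surely, which is the asserted consistency.

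For the non-ergodic case I would condition on $Q$. For $\lambda$-almost every $Q^*\in\cPerg$ the law $Q^*$ is ergodic, so the previous step applied under $Q^*$ shows $\hat\mu_f(I,\Phi,\T)\to\mu_{f,\Phi|Q=Q^*}^{(2)}(I)$ for $Q^*$-almost all realizations. Combining the mixture representation $P(\cdot)=\int_{\cPerg}Q^*(\cdot)\,\lambda(\dd Q^*)$ with the property, supplied by the ergodic decomposition (Theorem~\ref{decomp_theorem}), that $Q=Q^*$ holds $Q^*$-almost surely, I would integrate these conditional statements against $\lambda$ to conclude that $\hat\mu_f(I,\Phi,\T)$ converges $P$-almost surely to the random limit $\mu_{f,\Phi|Q}^{(2)}(I)$. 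The equivalence then follows from uniqueness of almost sure limits: the estimator is consistent, i.e.\ its limit is deterministic, exactly when $\mu_{f,\Phi|Q}^{(2)}(I)$ is $\lambda$-almost surely constant, that is, when $\mu_{f,\Phi|Q=Q^*}^{(2)}(I)$ does not depend on $Q^*$. Writing $c$ for this common value, Proposition~\ref{decomposition_mu} together with $\alpha^{(2)}(C(I))=\Eb_Q\alpha_{\Phi|Q}^{(2)}(C(I))$ then forces $c=\mu_f^{(2)}(I)$, so the deterministic limit coincides with the ergodic-case target.

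The step I expect to be most delicate is the transfer from conditional to unconditional convergence: Proposition~\ref{cor_averaging2} only furnishes, for each fixed $Q^*$, a $Q^*$-null exceptional set, and assembling these into a genuine $P$-null set after integrating against $\lambda$ requires the map $Q^*\mapsto\mu_{f,\Phi|Q=Q^*}^{(2)}(I)$ and the associated exceptional sets to be jointly measurable in $(Q^*,\varphi)$ --- precisely the regularity guaranteed by the ergodic decomposition theorem.
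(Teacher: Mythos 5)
Your argument is correct and follows essentially the same route as the paper's proof: apply Proposition~\ref{cor_averaging2} to the volume-normalized numerator and denominator separately and pass to the quotient for the ergodic case, then observe that in the non-ergodic case the estimator can only converge to the class-specific value $\mu_{f,\Phi|Q=Q^*}^{(2)}(I)$, so consistency holds exactly when that value is constant in $Q^*$. You merely spell out two details the paper leaves implicit --- the identification of the common constant with $\mu_f^{(2)}(I)$ via Proposition~\ref{decomposition_mu}, and the measurability needed to assemble the $Q^*$-null sets into a $P$-null set --- both of which are handled correctly.
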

\begin{proof}
By Proposition \ref{cor_averaging2}, the tuple consisting of the
numerator and the denominator of \eqref{est_0}, each normalized by the
volume of $[0, \T]$, converges a.s.\ to the vector
$\big(\alpha_f^{(2)}(C(I)), \alpha^{(2)}(C(I))\big)$ if $\Phi$ is
ergodic. The first assertion thus follows from the continuous mapping
theorem. In the non-ergodic case, clearly only $\mu_{f,
  \Phi|Q=Q^*}^{(2)}(I)$ can be estimated consistently for $Q^*$ being
the respective ergodicity class. Though, if $\mu_{f,
  \Phi|Q=Q^*}^{(2)}(I)$ is constant w.r.t.\ $Q^*$ we have
$\mu_{f}^{(2)}(I)= \mu_{f, \Phi|Q=Q^*}^{(2)}(I)$ for any
$Q^*\in\cPerg$.
\end{proof}

\medskip

To establish asymptotic normality of $\hat\mu_f(I, \Phi, \T)$, we
introduce some idealized assumptions. In particular, we assume
stochastic independence between the point locations and the marks of
the MPP.  For simplicity, we restrict to the case where $f$ only
depends on its first argument and the MPP is a process on $\R$.
\begin{cond}[$m$-dependent Random Field Model]\label{RFmodel}
Let $\tilde\Phi$ be a stationary unmarked point process on $\R$, for
which neighboring points have some minimum distance $d_0 > 0$. Let
$\{Y(t) : t\in\R\}$ be an independent stationary process with finite
second moments and a covariance function $C$ that has finite range,
i.e., $C(h)=0$ for all $|h|>h_0$ for some $h_0>0$.  Then, with
$m=[d_0/h_0]$, we say that an MPP $\Phi$ is an \emph{$m$-dependent
  Random Field Model}, if $\Phi \stackrel{d}{=} \lbrace (t_i,
Y(t_i), 1)\,|\, t_i\in\tilde\Phi\rbrace.$
\end{cond} 
The following theorem transfers a central limit theorem (CLT) for
arrays of $m$-dependent random variables to the MPP context. It also
covers a thinning of the MPP in which the threshold increases with the
observation window.  The result allows to derive asymptotically exact
confidence intervals for the estimator of $\mu_{f}^{(2)}(I)$ and is
applied in \cite{Malinowskietal2012b} in the context of extreme value
analysis for MPPs.

\begin{thm}[CLT for $m$-dependent Random Field Models]
\label{CLT_MPP}
Let $\Phi$ be an ergodic MPP that satisfies Condition \ref{RFmodel}.  
For $f:\R\to[0,\infty)$ and $u\geq 0$, let $f_u, f_{\condit,
    u}:\R\to[0,\infty)$ be given by $f_u(y) = (f(y)-u)_+ = (f(y)-u)
    \1_{f(y)>u}$ and $f_{\condit, u}(y)=\1_{f(y)>u}$.  Let
\begin{align*}
\hat\alpha_{f_{u}}^*(I, \Phi, T) =
\sum^{\neq}_{(t_1, y_1), (t_2, y_2) \in\Phi} 
    \Bigl( f_u(y_1) - \mu_{f_u, f_{\condit, u}}^{(2)}(I) \Bigr)
    \cdot f_{\condit, u}(y_1)\cdot\1_{(t_1, t_2)\in C(\T, I)}
\end{align*} 
be a centered version of $\hat\alpha_{f_{u}}(I, \Phi, T)$, where 
$\mu_{f_u, f_{\condit, u}}^{(2)}(I)$ is defined as in \eqref{mu_f_cond}. 
Let
$(u_T)_{T\geq 0}$ be a family of non-negative, non-decreasing numbers
such that the following conditions are satisfied:
\begin{align*}
  u_\infty = \lim_{T\to\infty} u_T \in [0, \infty] \quad &\text{exists},\\
  \lim_{T\to\infty} \Eb\left[ f_{u_T}(Y(0))^i \big| 
    f(Y(0))>u_T\right] <\infty \quad & (i=1, \ldots, 4),\\ 
  \frac{T^{-1}\hat\alpha_{1}(I, \Phi, T) - \lambda}{
    \Eb_{\Phi}\hat\alpha_{f_{\condit, {u_T}}}(I, \Phi, 1)} 
  \to 0 \quad & \text{a.s. } (T\to\infty).
\end{align*}
Then, for $I\in\BB(\R)$ and $T\to\infty$, we have
\begin{align*}
  \frac{\hat\alpha_{f_{u_T}}^*(I, \Phi, T)}{
    \sqrt{\hat\alpha_{f_{\condit,u_T}}(I, \Phi, T)}} 
  &\cvgdist
  \NN(0, s_{u_\infty}),
\end{align*}
where 
\begin{align*}
  s_{u_\infty} &= \lim_{T\to\infty} \left\{(\lambda_{u_T} T)^{-1} 
  \Var\left[ \hat\alpha_{f_{u_T}}^{*}(I, \Phi, T) \right]\right\},\\
  \lambda_u &= \Eb_\Phi \left[\hat\alpha_{f_{\condit,u}}(I, \Phi, 1)\right], 
  \quad u\geq 0.
\end{align*}
\end{thm}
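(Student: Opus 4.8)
The plan is to reduce the defining double sum to a single weighted sum over the ``first'' points, to apply a central limit theorem for $m$-dependent triangular arrays conditionally on the ground process $\tilde\Phi$, and then to replace the random normalizer by its deterministic counterpart via the ergodic theorem.

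First I would carry out the reduction. Since the summand of $\hat\alpha_{f_{u_T}}^*$ does not depend on the second mark $y_2$, summing out $t_2$ gives $\hat\alpha_{f_{u_T}}^*(I,\Phi,T)=\sum_{t_k\in\tilde\Phi\cap[\0,\T]} W_k N_k$, where $N_k=\#\{t\in\tilde\Phi: t-t_k\in I\}$ is a functional of the ground process alone and $W_k=\big(f_{u_T}(Y(t_k))-\mu\big)f_{\condit,u_T}(Y(t_k))$ with $\mu=\mu_{f_{u_T},f_{\condit,u_T}}^{(2)}(I)$. Using $f_{u_T}=f_{u_T}\cdot f_{\condit,u_T}$ together with the independence of marks and locations in Condition \ref{RFmodel}, a short computation identifies $\mu=\Eb[f_{u_T}(Y(0))\mid f(Y(0))>u_T]$, whence $\Eb W_k=0$ and $\hat\alpha_{f_{u_T}}^*$ is centred.

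Next I would condition on $\tilde\Phi$. For bounded $I$ the coefficients $N_k$ are then deterministic and, by the minimum spacing $d_0$ of $\tilde\Phi$, uniformly bounded, while $\{W_k\}$ is a stationary, mean-zero, $m$-dependent sequence: marks at points more than $m$ indices apart lie at distance exceeding $h_0$ and hence are independent, since $C$ has range $h_0$. A central limit theorem for $m$-dependent triangular arrays (via a Bernstein big-block/small-block decomposition) then applies to $\sum_k W_k N_k$. Its Lyapunov/Lindeberg condition follows from the uniform fourth-moment bound in the second hypothesis combined with the boundedness of the $N_k$, and the normalized conditional variance $(\lambda_{u_T}T)^{-1}\Var[\hat\alpha_{f_{u_T}}^*\mid\tilde\Phi]$ converges to the constant $s_{u_\infty}$, the finite range of $C$ reducing it to a banded covariance sum controlled by the second-moment part of the second hypothesis. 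Since $\Eb[\hat\alpha_{f_{u_T}}^*\mid\tilde\Phi]=0$, the expected conditional variance equals $\Var[\hat\alpha_{f_{u_T}}^*]$, so the limit is consistent with the definition of $s_{u_\infty}$; integrating the conditional central limit theorem over $\tilde\Phi$ (dominated convergence of the conditional characteristic functions) yields $(\lambda_{u_T}T)^{-1/2}\hat\alpha_{f_{u_T}}^*\stackrel{d}{\longrightarrow}\NN(0,s_{u_\infty})$.

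Finally I would replace the deterministic normalizer by the random one. Writing $p_u=\P(f(Y(0))>u)$, conditioning on $\tilde\Phi$ gives $\Eb[\hat\alpha_{f_{\condit,u_T}}(I,\Phi,T)\mid\tilde\Phi]=p_{u_T}\,\hat\alpha_1(I,\Phi,T)$ and $\lambda_{u_T}=p_{u_T}\lambda$, so that $(\lambda_{u_T}T)^{-1}\hat\alpha_{f_{\condit,u_T}}=(\lambda T)^{-1}\hat\alpha_1+(\lambda_{u_T}T)^{-1}R_T$; the first term tends to $1$ almost surely by the ergodic theorem (Proposition \ref{cor_averaging2}), and the remainder $R_T$ is negligible by the third hypothesis, which bounds the location-count fluctuation precisely at the scale $\lambda_{u_T}$, together with a variance estimate for the exceedance part. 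Hence $\hat\alpha_{f_{\condit,u_T}}(I,\Phi,T)/(\lambda_{u_T}T)\to1$ in probability, and Slutsky's theorem combined with the previous convergence gives the claim. The main obstacle is the genuinely triangular-array structure forced by the moving threshold $u_T$: when $u_\infty=\infty$ the exceedances become rare, the effective number of nonzero summands shrinks, and the conditional law of the summands changes with $T$, so the Lindeberg condition, the variance convergence, and the concentration of the random normalizer at the vanishing rate $\lambda_{u_T}T$ must all be controlled uniformly along the array --- which is exactly the role played by the second and third hypotheses.
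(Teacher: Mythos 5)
Your proposal is sound in outline but reaches the Gaussian limit by a genuinely different route than the paper. You condition on the ground process $\tilde\Phi$, which makes the number of summands and the coefficients $N_k$ deterministic, apply an $m$-dependent triangular-array CLT conditionally, and then integrate over $\tilde\Phi$; the paper instead works unconditionally, truncating the double sum after a \emph{deterministic} number $[\lambda_{u_T}T]$ of terms so that Berk's CLT for $m$-dependent triangular arrays applies directly, and then killing the remainder $\hat\alpha^*_{f_{u_T}}(I,\Phi,T)-\hat\alpha^{*,[\lambda_{u_T}T]}_{f_{u_T}}(I,\Phi,\infty)$ by a fourth-moment Chebyshev bound. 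Both routes use the same $m$-dependence count ($[\,|I|h_0/d_0^2\,]$ in the paper's indexing) and the same generalized ergodic lemma to replace $\hat\alpha_{f_{\condit,u_T}}(I,\Phi,T)$ by $\lambda_{u_T}T$. Your conditioning buys a cleaner separation of mark and location randomness and avoids the paper's remainder estimate, but it shifts the burden onto a step you only assert: you need the \emph{random} normalized conditional variance $(\lambda_{u_T}T)^{-1}\Var[\hat\alpha^*_{f_{u_T}}\mid\tilde\Phi]$ to converge a.s.\ (or in probability) to the constant $s_{u_\infty}$, not merely in expectation --- the law-of-total-variance remark gives only the latter. This is repairable: the conditional variance is $\sum_{k,l}N_kN_l\Cov(W_k,W_l)$, the covariance factor contributes $p_{u_T}$ times a quantity controlled by the second-moment hypothesis, and the remaining banded second-order functional of $\tilde\Phi$ alone (pairs within distance $h_0$) converges a.s.\ after normalization by $T$ via Proposition \ref{cor_averaging2}, with no $u_T$-dependence left in the ground-process part; but this needs to be said, since it is exactly where the triangular-array structure could otherwise bite. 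A second, shared caveat: both you and the paper pass from ``covariance of finite range'' to ``$m$-dependence'' of the marks, which strictly requires independence rather than mere uncorrelatedness beyond lag $h_0$; and both arguments implicitly need $\lambda_{u_T}T\to\infty$ for the Lindeberg/Lyapunov condition, which is only implicit in the third hypothesis.
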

The proof is given in Appendix \ref{app_proof_CLT}.  Note that the
asymptotic variance $s_{u_\infty}$ can be given in a more explicit
form for suitable choices of $f$ and suitable distributional
assumptions on the underlying random field $Y$. A related CLT result
was provided by \cite{Heinreich1999} for random measures associated to
germ-grain models.

\subsection{The non-ergodic case}

If $\Phi$ is non-ergodic, consistent estimation of summary statistics
generally requires multiple realizations of the process.  Let $P$ and
$\lambda$ denote the probability law and the ergodic mixture measure
of $\Phi$, respectively.  Then, drawing iid realizations of $\Phi$
corresponds to drawing ergodicity classes according to the mixture
measure $\lambda$.  Though, a finite collection of realizations merely
approximates the mixing measure $\lambda$ and we can only expect
consistency if both $n$ and $\T$ tend to infinity simultaneously. To
see why $n \to \infty$ is not sufficient, consider an MPP with
infinitely many ergodicity classes $Q_1, Q_2, \ldots$ and with
$\Eb_{\Phi|Q=Q_i} \Phi([0, 1])=2^{-i}$.  Then, for fixed $\T$, the
probability of observing at least one point in a realization that
belongs to class $i$ tends to zero as $i\to\infty$. Hence, the classes
$Q_i$, for $i$ large, are only captured by the estimator
if $\T$ also tends to infinity.

Considering  iid realizations $\Phi_1, \ldots, \Phi_n$ of $\Phi$, different
possibilities arise of how to put together the respective estimators.
Let $\bw=(w_1, \ldots, w_n)$ denote a vector of
weight functions $w_i:\M_0\times [0, \infty)^d \to [0, \infty)$. We
assume that for $\lambda$-almost all ergodic MPP laws $Q^*$ there
exist constants $w_i^*(Q^*)\geq 0$ with $w^*(Q^*) = \sum_{i=1}^n
w_i^*(Q^*) > 0$ to which the weights converge stochastically 
within the respective ergodicity class, i.e., 
\begin{align}
  P_{\Phi|Q=Q^*}\left( |w_i(\Phi, \T) - w_i^*(Q^*)| > \varepsilon \right) 
  \longrightarrow 0  \quad (\T\to\infty) 
  \label{weights_convergence}
\end{align}
for all $\varepsilon >0$. Then we
consider estimators of the form
\begin{align}
  \hat\mu_f^{n, \weighted}(I, \bw)  
  &= \hat\mu_f^{n, \weighted}(I, \bw, (\Phi_1,\ldots,\Phi_n), \T) 
  \notag\\ 
  &= \left(\sum w_i(\Phi_i, \T)\right)^{-1} 
  \sum_{i=1}^n w_i(\Phi_i, \T)  \hat\mu_f(I, \Phi_i, \T),
  \label{est_2}
\end{align}
Note that the functions $w_i$ might also depend on $I$.
With  $w_1=\ldots=w_n = n^{-1}$, we obtain as a special case
\begin{align}
  \hat\mu_f^n(I) = \hat\mu_f^n(I, (\Phi_1,\ldots,\Phi_n), \T) 
  = n^{-1} \sum_{i=1}^n \hat\mu_f(I, \Phi_i, \T).
  \label{est_1}
\end{align}

In order to estimate $\mu_f^{(2)}(I)$ consistently, 
according to the decomposition in (\ref{mu_f_summarize}),  
the weights have essentially to be chosen as
\begin{align}
w_i(\Phi_i, \T)=\hat\alpha^{(2)}(C(\T, I), \Phi_i) / v_\T
= \sum^{\neq}_{t_1, t_2\in\Phi_{i, \g}} \1_{(t_1, t_2)\in C(\T, I)} / v_\T,
\label{weights_alpha}
\end{align}
where $v_\T$ is the volume of the cube $[\0, \T]$. By
Proposition \ref{cor_averaging2}, $\hat\alpha^{(2)}(C(\T, I), \Phi_i)
/ v_\T$ converges to $\alpha_{\Phi|Q=Q_i}^{(2)}(C(I))$ a.s.\ as
$\T\to\infty$, where $Q_i$ is the realized ergodicity class of $\Phi_i$.  
With $\bw$ being the vector of weights from
\eqref{weights_alpha}, we define
\begin{align}
  \hat\mu_f^{\alpha}(I, (\Phi_1,\ldots,\Phi_n), \T) = 
   \hat\mu_f^{n, \weighted}(I, \bw, (\Phi_1,\ldots,\Phi_n), \T),
  \label{est_concat}
\end{align}
which, in a sense,
represents the family of \emph{all} pairs of points with a distance 
contained in $I$ from all realizations.
This choice of weights satisfies the above stochastic
convergence condition (\ref{weights_convergence}) and is sufficient but not
necessary for consistency. The following theorem gives a weaker 
set of conditions that 
is still sufficient for consistency
\begin{thm}\label{prop_consistency_weighted_est}
Let $\Phi_i$, $i\in\Nb$, be iid copies of a possibly non-ergodic MPP
$\Phi$ and let $Q_{j_i}$ denote the respective ergodicity classes.
For weight functions $\tilde w_i:\M_0\times [0, \infty)^d \to [0,
    \infty)$ and iid random factors $W_i$ with 
$\Eb|W_i|<\infty$, $i\in\Nb$, let $w_i(\Phi_i,
    \T) = W_i \cdot \tilde w_i(\Phi_i, \T)$ and $\bw=(w_1(\Phi_1, \T),
    \ldots, w_n(\Phi_n, \T))$.  Then, $\hat\mu_f^{n, \weighted}(I,
    \bw)$ is consistent for $\mu_f^{(2)}(I)$ if the following
    conditions hold:
\begin{align}
  W_i &> 0\qquad\text{a.s.},\label{wi_1}\\
  \Var \tilde w_i(\Phi_i,\T) &\leq c_1 \quad\, \text{ for some } c_1>0,\label{wi_2}\\
n^{-1} \Eb \sum_{i=1}^n \tilde w_i \geq c_2&>0 \qquad \forall n\geq n_0 \text{ for some } n_0\in\Nb,\label{wi_3}\\
  \Eb\left[ W_i \tilde w_i(\Phi_i,\T)\right]
  &= \Eb\left[ W_i\right] \cdot \Eb\left[\tilde w_i(\Phi_i,\T)\right] \label{wi_indep1}\\
  \Eb\left[ W_i \cdot \hat\alpha^{(2)}(C(\T, I),\Phi_i)\mu_{f, \Phi|Q=Q_{j_i}}^{(2)}(I)\right]
  &= \Eb\left[ W_i\right] \cdot \Eb\left[\hat\alpha^{(2)}(C(\T, I),\Phi_i)\mu_{f, \Phi|Q=Q_{j_i}}^{(2)}(I)\right] \label{wi_indep3}\\[-3.5em]\notag
\end{align}
\begin{align}
  \P\Biggl\{ \max_{i=1}^n\left|
  \frac{\tilde w_i(\Phi_i,\T) \sum_{j=1}^n \hat\alpha^{(2)}(C(\T, I),\Phi_j)}{
    \hat\alpha^{(2)}(C(\T, I), \Phi_i)\sum_{j=1}^n \tilde w_j(\Phi_j,\T)}
  \right|  &> c_3 \Biggr\} \to 0 \quad (n, \T \to\infty) 
  \label{wi_1aa}
\end{align}
\end{thm}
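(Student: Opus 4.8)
The plan is to put the estimator in its ratio form $\hat\mu_f^{n,\weighted}(I,\bw)=N_{n,\T}/D_{n,\T}$ with $D_{n,\T}=\sum_{i=1}^n W_i\tilde w_i(\Phi_i,\T)$ and $N_{n,\T}=\sum_{i=1}^n W_i\tilde w_i(\Phi_i,\T)\,\hat\mu_f(I,\Phi_i,\T)$, and to prove convergence in probability to $\mu_f^{(2)}(I)$ as $n,\T\to\infty$. The natural first move is to subtract the target and write
\begin{align*}
\hat\mu_f^{n,\weighted}(I,\bw)-\mu_f^{(2)}(I)
=\frac{\sum_{i=1}^n W_i\tilde w_i(\Phi_i,\T)\bigl(\hat\mu_f(I,\Phi_i,\T)-\mu_f^{(2)}(I)\bigr)}{\sum_{i=1}^n W_i\tilde w_i(\Phi_i,\T)},
\end{align*}
and to split each summand through the realized ergodicity class $Q_{j_i}$ as $\hat\mu_f(I,\Phi_i,\T)-\mu_f^{(2)}(I)=A_i+B_i$, where $A_i=\hat\mu_f(I,\Phi_i,\T)-\mu_{f,\Phi|Q=Q_{j_i}}^{(2)}(I)$ is the within-class estimation error and $B_i=\mu_{f,\Phi|Q=Q_{j_i}}^{(2)}(I)-\mu_f^{(2)}(I)$ is the deviation of the class-conditional mean mark from the aggregate target. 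By the pointwise ergodic theorem for MPPs (Proposition \ref{cor_averaging2}), applied inside each class exactly as in the proof of Lemma \ref{lem_consistency}, one has $A_i\to 0$ a.s.\ and $\hat\alpha^{(2)}(C(\T,I),\Phi_i)/v_\T\to\alpha_{\Phi|Q=Q_{j_i}}^{(2)}(C(I))$ a.s.\ as $\T\to\infty$.

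First I would control the denominator. Condition \eqref{wi_indep1} lets me factor $\Eb[n^{-1}\sum_i W_i\tilde w_i]=\Eb[W_1]\cdot n^{-1}\Eb\sum_i\tilde w_i$, which by \eqref{wi_1} and \eqref{wi_3} is bounded below by $c_2\Eb[W_1]>0$ for $n\geq n_0$; the uniform variance bound \eqref{wi_2} together with the iid structure of the pairs $(W_i,\Phi_i)$ then yields, via Chebyshev's inequality, that $n^{-1}D_{n,\T}$ concentrates around a strictly positive value. Hence $D_{n,\T}$ is bounded away from zero with probability tending to one, the ratio is well behaved, and it suffices to show that the numerator average $n^{-1}\sum_i W_i\tilde w_i(A_i+B_i)$ tends to zero in probability.

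For the bias term $\sum_i W_i\tilde w_i B_i$ I would re-express the weights against the intensity weights, using that $\hat\alpha^{(2)}(C(\T,I),\Phi_i)\,B_i/v_\T\to\alpha_{\Phi|Q=Q_{j_i}}^{(2)}(C(I))\bigl(\mu_{f,\Phi|Q=Q_{j_i}}^{(2)}(I)-\mu_f^{(2)}(I)\bigr)$ a.s. The decisive point is that, by the ergodic decomposition in Proposition \ref{decomposition_mu}, the $Q$-expectation of this limit vanishes, since $\Eb_Q[\alpha_{\Phi|Q}^{(2)}(C(I))\,\mu_{f,\Phi|Q}^{(2)}(I)]=\alpha^{(2)}(C(I))\,\mu_f^{(2)}(I)$. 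A weak law of large numbers for the iid array then drives $n^{-1}\sum_i W_i\hat\alpha^{(2)}(C(\T,I),\Phi_i)\,B_i/v_\T$ to zero, the random factors $W_i$ being separated out by the uncorrelatedness assumption \eqref{wi_indep3}; this is exactly why the target must be read as the intensity-weighted mixture of Proposition \ref{decomposition_mu}, and it is condition \eqref{wi_1aa}, comparing the $\tilde w$-normalization to the $\alpha$-normalization, that forces the limiting weight normalization to agree with the intensity weighting so that the correct value $\mu_f^{(2)}(I)$ is recovered rather than $\tilde\mu_f^{(2)}(I)$.

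The estimation-error term $\sum_i W_i\tilde w_i A_i$ is where the main difficulty lies: $A_i$ only vanishes as $\T\to\infty$ while averaging over realizations needs $n\to\infty$, so one is handling a triangular array indexed simultaneously by $i$ and $\T$ and must rule out that a single poorly-estimated realization carrying an atypically large normalized weight dominates the sum. This is precisely the purpose of \eqref{wi_1aa}, which bounds, uniformly in $i$ and with probability tending to one, the normalized weight $\tilde w_i/\sum_j\tilde w_j$ by $c_3$ times the normalized intensity weight $\hat\alpha^{(2)}(C(\T,I),\Phi_i)/\sum_j\hat\alpha^{(2)}(C(\T,I),\Phi_j)$, so that no summand carries more than a vanishing fraction of the total mass. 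Combining this uniform negligibility with $A_i\to 0$ a.s.\ and the bound \eqref{wi_2}, a dominated-convergence/Lindeberg argument gives $D_{n,\T}^{-1}\sum_i W_i\tilde w_i A_i\to 0$ in probability along the joint limit, and Slutsky's theorem finishes the proof. I expect the careful bookkeeping of this simultaneous double limit — in particular specifying an admissible coupling of the rates at which $n$ and $\T$ grow and verifying uniform integrability of the weights against the slowly-vanishing errors $A_i$ — to be the principal obstacle, the remaining steps (denominator positivity, factoring out $W_i$, and identification of the limit via Propositions \ref{decomposition_mu} and \ref{cor_averaging2}) being essentially routine.
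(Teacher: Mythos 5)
Your proposal follows essentially the same route as the paper's proof: the same decomposition of the error into a within-class estimation term (handled via Lemma \ref{lem_consistency}) and a bias term (handled by re-expressing the $\tilde w_i$-weights against the intensity weights $\hat\alpha^{(2)}(C(\T,I),\Phi_i)$, factoring out the $W_i$ via \eqref{wi_indep1} and \eqref{wi_indep3}, invoking \eqref{wi_1aa} for the maximal normalized weight, and identifying the limit through Proposition \ref{decomposition_mu}). If anything, you are more explicit than the paper about the triangular-array subtlety in the joint limit $n,\T\to\infty$ for the estimation-error term, which the published proof passes over in one sentence.
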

\begin{proof}
We consider 
\begin{align}
  &\left| \frac{\sum_{i=1}^n w_i(\Phi_i, \T)  \hat\mu_f(I, \Phi_i, \T)}{
    \sum_{i=1}^n w_i(\Phi_i, \T)} - \mu_f^{(2)}(I) \right|\notag\\
  &\qquad\leq  \left| \frac{\sum_{i=1}^n w_i(\Phi_i, \T)  
    \big[\hat\mu_f(I, \Phi_i, \T) - \mu_{f, \Phi|Q=Q_{j_i}}^{(2)}(I)\big]}{
    \sum_{i=1}^n w_i(\Phi_i, \T)} \right|\label{tri_sum_1}\\ 
 &\hspace*{12em}
  +  \left| \frac{\sum_{i=1}^n W_i \tilde w_i(\Phi_i, \T)  
    \mu_{f, \Phi|Q=Q_{j_i}}^{(2)}(I)}{
    \sum_{i=1}^n W_i \tilde w_i(\Phi_i, \T)} - \mu_f^{(2)}(I)\right|
  \label{tri_sum_2} 
\end{align}
By Lemma \ref{lem_consistency}, $\hat\mu_f(I, \Phi_i, \T)$ is
consistent (for $\T\to\infty$) within the respective ergodicity
class. Thus, \eqref{tri_sum_1} converges to 0 in probability if
$\T\to\infty$. Using the short notation $\alpha_i =
\hat\alpha^{(2)}(C(\T, I), \Phi_i)$ and $\tilde w_i = \tilde
w_i(\Phi_i, \T)$, we have
\begin{align*}
  \eqref{tri_sum_2} &=   
   \left| \sum_{i=1}^n \frac{ W_i \alpha_i
    \big[\mu_{f, \Phi|Q=Q_{j_i}}^{(2)}(I)- \mu_f^{(2)}(I)\big]}{
    \sum_{j=1}^n W_j \alpha_j}
  \cdot\frac{\tilde w_i \sum_{j=1}^n W_j\alpha_j}{
    \alpha_i\sum_{j=1}^n W_j\tilde w_j}
  \right|\\
  &\leq \max_{i=1}^n\left\{ \left|\frac{\tilde w_i \sum_{j=1}^n \alpha_j}{
    \alpha_i\sum_{j=1}^n \tilde w_j}\right|\right\}
  \cdot\left|\frac{\sum_{j=1}^n W_j\alpha_j}{\sum_{j=1}^n \alpha_j}\right|
  \cdot \left|\frac{\sum_{j=1}^n \tilde w_j}{\sum_{j=1}^n W_j\tilde w_j}
  \right| \\
  & \hspace*{15em}
  \cdot \left|  \frac{\sum_{i=1}^n W_i \alpha_i
    \big[\mu_{f, \Phi|Q=Q_{j_i}}^{(2)}(I)- \mu_f^{(2)}(I)\big]}{
    \sum_{i=1}^n W_i \alpha_i}
  \right|
\end{align*}
Since by assumption, $(n^{-1} \Eb \sum_{i=1}^n \tilde
w_i)_{n\in\Nb}$
is eventually bounded away from 0 and the variance of the $\tilde w_i$
is uniformly bounded, the law of large numbers yields that
$\sum_{j=1}^n \tilde w_j / \Eb\sum_{j=1}^n\tilde w_j$ and
$\sum_{j=1}^n \tilde W_j w_j / \Eb\sum_{j=1}^n W_j\tilde w_j$ converge
to 1 in probability.  Additionally using that $\Eb[W_j \tilde w_j] =
\Eb W_j \Eb \tilde w_j$, for $n\to\infty$, we get the convergence
\begin{align*}
 \frac{\sum_{j=1}^n \tilde w_j}{\sum_{j=1}^n W_j\tilde w_j}
&=\frac{\sum_{j=1}^n \tilde w_j / \Eb\sum_{j=1}^n\tilde w_j}{
   \sum_{j=1}^n W_j\tilde w_j / \Eb\sum_{j=1}^n W_j\tilde w_j}
 \cdot \frac{\Eb\sum_{j=1}^n\tilde w_j}{\Eb\sum_{j=1}^n W_j\tilde w_j} 
\stackrel{p}{\longrightarrow} \frac{1}{\Eb W_1}
\end{align*}
as $n\to\infty$. Similarly, for $n\to\infty$ and $n,\T\to\infty$, we have
\begin{align*}
  \frac{\sum_{j=1}^n W_j\alpha_j}{ \sum_{j=1}^n \alpha_j}
  &\stackrel{p}{\longrightarrow} \frac{\Eb [W_1\alpha_1]}{\Eb\alpha_1},\\  
  \frac{\sum_{i=1}^n W_i\alpha_i \mu_{f, \Phi|Q=Q_{j_i}}^{(2)}(I)}{
    \sum_{i=1}^n W_i\alpha_i}
  &\stackrel{p}{\longrightarrow} 
  \frac{\Eb \left[ \alpha^{(2)}_{\Phi | Q=Q_{j_i}}(C(I)) \cdot \mu_{f, \Phi|Q=Q_{j_i}}^{(2)}(I)\right]}{\Eb \left[ \alpha^{(2)}_{\Phi | Q=Q_{j_i}}(C(I))\right]} = \mu_f^{(2)}(I),
\end{align*}
respectively.
Together with \eqref{wi_1aa} we obtain that \eqref{tri_sum_2} converges to 0 in probability, which completes the proof.
\end{proof}
Note that if $\tilde w_i =\tilde w$ for all $i\in\Nb$ for some weight
function $\tilde w$ with $\Eb|\tilde w(\Phi, \T)|<\infty$, 
the $\tilde w_i(\Phi_i,\T)$ are iid and
conditions \eqref{wi_2}, \eqref{wi_3} and \eqref{wi_indep1} become
obsolete.

\medskip

 Now we turn to the estimation of $\tilde\mu_f^{(2)}(I)$.  By
 construction (cf.\ Definition \ref{def:mu_non-erg}), $\hat\mu_f^n(I)$
 consistently estimates $\tilde \mu_f^{(2)}(I)$; in contrast to
 $\hat\mu_f^{\alpha}(I)$, it reflects a random pair of points with
 distance $I$ within a randomly chosen ergodicity class.  Again, also
 other choices of weights are feasible for consistent estimation of
 $\tilde \mu_f^{(2)}(I)$, apart from the choice $w_i(\Phi_i, \T) = 1$.
 By replacing $\hat\alpha^{(2)}(C(\T,I), \Phi_i)$ by the constant~1 in Theorem \ref{prop_consistency_weighted_est}, we get the following corollary.
\begin{cor}
Under the assumptions of Theorem
\ref{prop_consistency_weighted_est} with $\hat\alpha^{(2)}(C(\T,I),
\Phi_i)$ being replaced by the constant~1, $\hat\mu_f^{n,
  \weighted}(I, \bw)$ is a consistent estimator for $\tilde
\mu_f^{(2)}(I)$.
\end{cor}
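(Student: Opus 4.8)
The plan is to observe that the corollary is the same statement as Theorem \ref{prop_consistency_weighted_est}, only with a different estimation target, and that the proof of that theorem transfers essentially verbatim once $\alpha_i = \hat\alpha^{(2)}(C(\T, I), \Phi_i)$ is replaced by the constant $1$. The conceptual starting point is that the decomposition \eqref{mu_f_summarize}, which identifies $\mu_f^{(2)}(I)$ as the $\alpha^{(2)}_{\Phi|Q}$-weighted average of the class-wise means $\mu_{f, \Phi|Q}^{(2)}(I)$, degenerates under this replacement into the equally-weighted average $\Eb_Q \mu_{f, \Phi|Q}^{(2)}(I)$, which is exactly $\tilde\mu_f^{(2)}(I)$ by Definition \ref{def:mu_non-erg}. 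Hence the quantity targeted shifts from $\mu_f^{(2)}(I)$ to $\tilde\mu_f^{(2)}(I)$, while the mechanism of the proof is left intact.

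First I would reuse the triangle split \eqref{tri_sum_1}--\eqref{tri_sum_2}, now with $\tilde\mu_f^{(2)}(I)$ in place of $\mu_f^{(2)}(I)$. Term \eqref{tri_sum_1} is untouched: it invokes only the within-class consistency of $\hat\mu_f(I, \Phi_i, \T)$ supplied by Lemma \ref{lem_consistency}, contains no factor $\alpha_i$, and so still tends to $0$ in probability as $\T\to\infty$. For term \eqref{tri_sum_2} I would replay the same algebraic factorization with $\alpha_i = 1$ throughout; then $\sum_{j=1}^n \alpha_j = n$, the auxiliary ratio $\bigl(\tilde w_i \sum_j W_j \alpha_j\bigr)/\bigl(\alpha_i \sum_j W_j \tilde w_j\bigr)$ simplifies accordingly, and the resulting max-factor is controlled exactly by the modified hypothesis \eqref{wi_1aa} read with $\alpha_i$ replaced by $1$.

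The one step that genuinely changes is the identification of the limit of the weighted class-mean average. With $\alpha_i = 1$ the law of large numbers, together with $\Eb|W_i| < \infty$, gives $\bigl(\sum_i W_i \mu_{f, \Phi|Q=Q_{j_i}}^{(2)}(I)\bigr)/\bigl(\sum_i W_i\bigr) \stackrel{p}{\longrightarrow} \Eb[W_1 \mu_{f,\Phi|Q}^{(2)}(I)]/\Eb[W_1]$, and the modified independence assumption \eqref{wi_indep3}, likewise read with $\alpha_i = 1$, collapses this to $\Eb[\mu_{f,\Phi|Q}^{(2)}(I)] = \tilde\mu_f^{(2)}(I)$. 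Consequently the final factor in the bound for \eqref{tri_sum_2} converges to $\tilde\mu_f^{(2)}(I) - \tilde\mu_f^{(2)}(I) = 0$, while the remaining factors $\sum_j W_j/n \to \Eb W_1$ and $\sum_j \tilde w_j / \sum_j W_j \tilde w_j \to 1/\Eb W_1$ stay bounded in probability as in the original argument.

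I do not expect a serious obstacle here, since the corollary is a formal specialization; if anything, the $\alpha_i = 1$ case is cleaner, because the potential need to split $\Eb[W_1\alpha_1]$ in the general proof becomes trivial once $\alpha_1 \equiv 1$. The only point demanding care is bookkeeping: verifying that the substitution $\alpha_i \mapsto 1$ is applied consistently across the decomposition \eqref{mu_f_summarize}, the conditions \eqref{wi_indep3} and \eqref{wi_1aa}, and the target, so that the limit really lands on $\tilde\mu_f^{(2)}(I)$ rather than on $\mu_f^{(2)}(I)$.
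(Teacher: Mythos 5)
Your proposal is correct and matches the paper's intent exactly: the paper gives no separate proof, stating only that the corollary follows "by replacing $\hat\alpha^{(2)}(C(\T,I),\Phi_i)$ by the constant~1" in Theorem \ref{prop_consistency_weighted_est}, and your walkthrough of the triangle split with $\alpha_i\equiv 1$ — identifying the new limit as $\Eb_Q\mu_{f,\Phi|Q}^{(2)}(I)=\tilde\mu_f^{(2)}(I)$ via the modified condition \eqref{wi_indep3} — is precisely the verification that substitution entails.
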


\begin{rem}
If $\Phi$ is ergodic, $\hat\mu_f^{n, \weighted}(I, \bw)$ is consistent
for $\mu_f^{(2)}(I)$ (as $\T\to\infty$) for any choice of weights
$\bw$ that satisfies \eqref{weights_convergence}.  Note that in this
case, consistency is independent of $n$, which can be fixed to any
finite value.
\end{rem}
\begin{proof}
If $\Phi$ is ergodic, the mixing measure $\lambda$ is the one-point
distribution $\delta_P$ and condition (\ref{weights_convergence})
simply means stochastic convergence of the weights w.r.t.\ $P$. The
assertion directly follows from the continuous mapping theorem.
\end{proof}

\subsection{Variance minimization}

In what follows, we seek for an optimal consistent estimator for
$\tilde\mu_f^{(2)}(I)$ in the sense of minimal variance. We introduce
some additional assumptions on the mark-location dependence for
analytical tractability. For simplicity, we set $\tilde
w_i(\Phi_i,\T)=1$, i.e., we consider $w_i(\Phi_i,\T)=W_i$. Let
$\A_n^*$ denote the $\sigma$-algebra generated by the unmarked ground
processes $\Phi_{1, \g}, \ldots, \Phi_{n, \g}$, i.e., $\A_n^* =
\sigma(\{ \{\omega:\Phi_{i, \g}(\omega)(B)=k\} : k\in\Nb, B\in\BB,
i=1, \ldots, n \})$.  We assume that $\Eb[\hat\mu_f(I,\Phi_i,\T)\,|\,
  \A_n^*]$ is a.s.\ constant.  We further assume that $\A_n^*$ is
maximal w.r.t.\ this property and that $\Var \left[\left. \hat\mu_f(I,
  \Phi, \T) \right| \A_n^* \right]$ is independent of the random
ergodicity class~$Q$.
\begin{prop}\label{prop:var_min}
With the above notation and assumptions, 
the variance minimizing weights for
$\hat\mu_f^{n, \weighted}(I, \bw, (\Phi_1,\ldots,\Phi_n), \T)$ that satisfy 
(\ref{wi_1})--(\ref{wi_1aa}) with
$\hat\alpha^{(2)}(C(\T, I), \Phi_i)$ being replaced by~1 are given by 
\begin{align*}
  w_i(\Phi_i, \T) =W_i 
  =\Var \left[\left. \hat\mu_f(I, \Phi_i, \T) \right| \A_n^* \right]^{-1}.
\end{align*}
\end{prop}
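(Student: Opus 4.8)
The plan is to condition on the ground-process $\sigma$-algebra $\A_n^*$ and exploit the conditional independence of the single-realization estimators. Write $X_i = \hat\mu_f(I, \Phi_i, \T)$ and $v_i = \Var[\hat\mu_f(I, \Phi_i, \T)\mid\A_n^*]$. Since the $\Phi_i$ are independent and each $\Phi_{i,\g}$ is a function of $\Phi_i$, the $X_i$ are conditionally independent given $\A_n^*$, and $\Var[X_i\mid\A_n^*]=\Var[X_i\mid\Phi_{i,\g}]$; in particular each $v_i$ is $\A_n^*$-measurable, and by the assumption that this conditional variance does not depend on the ergodicity class $Q$ it is a genuine function of $\Phi_{i,\g}$, hence admissible as a weight $w_i(\Phi_i,\T)$. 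I would therefore optimize over $\A_n^*$-measurable weights $W_i$ and start from the law of total variance applied to $\hat\mu_f^{n,\weighted}(I,\bw) = (\sum_i W_i)^{-1}\sum_i W_i X_i$.

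The key simplification comes from the maximality hypothesis: because $\Eb[X_i\mid\A_n^*]$ equals a deterministic constant $\theta$ almost surely (common to all $i$ by the iid structure, and with value immaterial for the optimization) and the $W_i$ are $\A_n^*$-measurable, the conditional mean $\Eb[\hat\mu_f^{n,\weighted}(I,\bw)\mid\A_n^*]=\theta$ is itself constant. Hence the between-group term in the law of total variance vanishes and, using the conditional independence of the $X_i$,
\[
  \Var\bigl[\hat\mu_f^{n,\weighted}(I,\bw)\bigr]
  = \Eb\Bigl[\Var\bigl[\hat\mu_f^{n,\weighted}(I,\bw)\mid\A_n^*\bigr]\Bigr]
  = \Eb\left[\frac{\sum_{i=1}^n W_i^2 v_i}{\bigl(\sum_{i=1}^n W_i\bigr)^2}\right].
\]
Since the integrand is scale-invariant in $W=(W_1,\ldots,W_n)$, it can be minimized separately for each realization of $\A_n^*$, which reduces the problem to a deterministic one.

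For fixed $\A_n^*$ the remaining task is the classical inverse-variance weighting problem of minimizing $\sum_i W_i^2 v_i / (\sum_i W_i)^2$. I would invoke Cauchy--Schwarz in the form $\bigl(\sum_i W_i\bigr)^2 = \bigl(\sum_i (W_i\sqrt{v_i})\,v_i^{-1/2}\bigr)^2 \le \bigl(\sum_i W_i^2 v_i\bigr)\bigl(\sum_i v_i^{-1}\bigr)$, which yields the lower bound $\bigl(\sum_i v_i^{-1}\bigr)^{-1}$ with equality precisely when $W_i\sqrt{v_i}\propto v_i^{-1/2}$, i.e.\ $W_i\propto v_i^{-1}$. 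Choosing the representative $W_i = v_i^{-1} = \Var[\hat\mu_f(I,\Phi_i,\T)\mid\A_n^*]^{-1}$ attains the pointwise minimum for every $\A_n^*$ and therefore minimizes the unconditional variance.

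It remains to check admissibility, namely that these weights satisfy \eqref{wi_1}--\eqref{wi_1aa} with $\hat\alpha^{(2)}(C(\T,I),\Phi_i)$ replaced by $1$: positivity \eqref{wi_1} is immediate once the degenerate case of vanishing conditional variance is excluded, and the remaining conditions hold by the preceding corollary, so the estimator stays consistent for $\tilde\mu_f^{(2)}(I)$. The only genuinely delicate point is the first one: one must verify that conditioning on $\A_n^*$ really renders the $X_i$ conditionally independent with a common deterministic conditional mean, which is exactly what the maximality of $\A_n^*$ and the $Q$-independence of the conditional variance are there to guarantee. Once that reduction is in place, the optimization is routine.
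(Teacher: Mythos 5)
Your proposal is correct and follows essentially the same route as the paper: decompose the unconditional variance by conditioning on $\A_n^*$, observe that the between-group term vanishes because $\Eb[\hat\mu_f(I,\Phi_i,\T)\,|\,\A_n^*]$ is a.s.\ constant, and reduce to the classical inverse-variance weighting problem solved pointwise (you via Cauchy--Schwarz, the paper via Lagrange multipliers --- an immaterial difference). Your explicit remark that conditional independence of the $X_i$ given $\A_n^*$ is needed to write the conditional variance of the weighted sum as $\sum_i W_i^2 v_i/(\sum_i W_i)^2$ is a point the paper leaves implicit, and is a welcome clarification rather than a deviation.
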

Note that an analog variance minimizing procedure via random factors
$W_i$ could also be included into the estimator $\hat\mu_f^{\alpha}$ of
$\mu_f^{(2)}(I)$.
\begin{proof}[Proof of Proposition \ref{prop:var_min}]
For general $\A_n^*$-measurable weights
$w_i(\Phi_i, \T)$, $i=1,\ldots,n$, we have
\allowdisplaybreaks
\begin{align}
  \Var &
  \left[
    \hat\mu_f^{n, \weighted}(I, \bw, (\Phi_1,\ldots,\Phi_n), \T)
  \right] \notag\\
  &= \Eb \left[ 
    \frac{1}{\left(\sum w_i(\Phi_i, \T)\right)^{2}} 
    \sum_{i=1}^n w_i(\Phi_i, \T)^2
    \Var \left[\left.  \hat\mu_f(I, \Phi_i, \T) 
      \right| \A_n^* \right]
    \right]\notag\\
  &+ \Var \left[
    \frac{1}{\sum w_i(\Phi_i, \T)} 
    \sum_{i=1}^n w_i(\Phi_i, \T) 
    \Eb \left[\left.  \hat\mu_f(I, \Phi_i, \T) 
      \right| \A_n^* \right]
    \right]\notag\\
  &= \Eb \left[ 
    \sum_{i=1}^n w_i^{\rel}(\Phi_i, \T)^2
    \Var \left[\left.  \hat\mu_f(I, \Phi_i, \T) 
      \right| \A_n^* \right]
    \right]  \quad + \quad 0\label{var_mu_f}
 \end{align}
with $w_i^{\rel}(\Phi_i, \T) = w_i(\Phi_i, \T) / \sum_{i=1}^n w_i(\Phi_i, \T)$.
Since any weighted average $\sum v_i^2 x_i$ with $x_i>0$ and $\sum
v_i=1$ is minimized by $v_i= x_i^{-1}/\sum x_i^{-1}$ (Lagrange method),
the unconditional variance  (\ref{var_mu_f}) is minimized by choosing
\begin{align*}
  w_i(\Phi_i, \T) =W_i 
  =\Var \left[\left. \hat\mu_f(I, \Phi_i, \T) \right| \A_n^* \right]^{-1}.
\end{align*}
The $W_i$ are $\A_n^*$-measurable by definition of the conditional
variance and satisfy (\ref{wi_1})--(\ref{wi_1aa}) with
$\hat\alpha^{(2)}(C(\T, I), \Phi_i)$ being replaced by~1.
Maximality of
$\A_n^*$ ensures optimality of the weights.
\end{proof}

\medskip

If there exist interaction effects in the
MPP that are of higher than second order, 
the assumption on $\Eb[\hat\mu_f(I,\Phi_i,\T)\,|\, \A_n^*]$ might
not be satisfied anymore and weighting according to the above
conditional variances should be handled with care. 
Clusters of point locations which tend to increase the conditional
variance of $\hat\mu_f$ given the ground process, can additionally
influence the mean of other marks in excess of the bivariate
interaction measured by $\mu_f^{(2)}(I)$. Then, a bias will be
introduced by using the above random weights.  More generally, the
more is known about the relation between $\hat\mu_f(I, \Phi, \T)$ and
the ground process $\Phi_{\g}$, the more can be gained from using
different (random) weights while preserving consistency 
of the estimator. Without any assumption, only
deterministic or independent weights are feasible and then
$w_i(\Phi_i, \T)=1$ is naturally the best choice, i.e., the use of
$\hat\mu_f^n(I)$.

We consider two simple examples of optimal weighting in the
following. Here we assume that the $z$-components of the marks are 1
for all points.  Recall that $\hat\mu_f(I, \Phi, \T)=
\hat\alpha_{f}(I, \Phi, \T) \big/ \hat\alpha_{1}(I, \Phi, \T)$, that
the denominator is $\A_n^*$-measurable, and that $\hat\alpha_{f}(I,
\Phi_i, \T)$ is a sum consisting of $\hat\alpha_{1}(I, \Phi_i, \T)$
random summands.

\begin{rem}\label{ex:iid_case}
In general, the summands of $\hat\alpha_{f}(I, \Phi_i, \T)$ are not iid. 
However, if conditionally on $\A_n^*$, the summands   
\emph{were} iid with variance $v$, the conditional variance
 $\Var\left[\hat\mu_f(I, \Phi, \T) \,|\, \A_n^*\right]$ would be 
$v/\hat\alpha_{1}(I, \Phi_i, \T)$.
\end{rem}
In the following scenarios, we assume $f$ to depend on its first
argument, only. The proofs are given in Appendix \ref{app_proof_ex}.
\begin{ex}\label{prop_toyexample1} 
Let $\Phi$ have marks that are stochastically independent of the
process of point locations and let these point locations be fully
regularly spaced in every realization. Let $v_\T$ and $N=N(\T)$ denote
the volume of $[\0, \T]$ and the random number of points in $[\0, \T]$,
respectively, and assume that the $f(y_i)$, $i\in\Z$, are iid with
variance $v$.  Then, asymptotically, $\Var[\hat\mu_f(I, \Phi, \T) |
  \A_n^*] \sim v/N$ and the resulting weights are $w_i(\Phi_i)=
N_i/v$, where $N_i$ denotes the number of points within the $i$-th
realization.
\end{ex}
 Since $N(\T)$
is usually much smaller than $\hat\alpha_{1}(I, \Phi, \T)$, the 
variance $\Var[\mu_f(I, \Phi, \T) | \A_n^*]$ in Proposition 
\ref{prop_toyexample1} is larger than the one in
the hypothetical example in Remark \ref{ex:iid_case}.

In the following example,  we consider arbitrary point locations but 
still assume independence between marks and locations.
\begin{ex}\label{toy_ex_2} 
Let $\tilde\Phi$ be a one-dimensional, 
stationary unmarked point process and $Y$ a
stationary continuous-time process which is independent of
$\tilde\Phi$ and such that $f(Y)$ has finite second moments. We
consider the MPP $\Phi = \{(t, Y(t), 1) : t\in\tilde\Phi \}$. Then
\begin{align*}
\Var&\left[\left.\hat\mu_f(I, \Phi, T) \right| \A_n^* \right] \\
&= 
\frac{\sum_{t_1 \in \Phi_{\g}\cap[0,\,T]} 
   \sum_{s_1 \in \Phi_{\g}\cap[0,\,T]}
    \Cov\bigl[ f(Y(t_1)), f(Y(s_1)) \bigr] 
    n(t_1, \Phi_{\g}, I) n(s_1, \Phi_{\g}, I)
    }{ \left[ \sum_{t_1 \in \Phi_{\g}\cap[0,\,T]} n(t_1, \Phi_{\g}, I) \right]^2 } ,
\end{align*}
where
$n(t_1, \Phi_{\g}, I) = 
%\begin{cases}
\sum_{t_2 \in \Phi_{\g}\backslash\{t_1\}} 
\1_{t_2-t_1 \in I}$.
\end{ex}

\subsection{Remarks}

\begin{rem}
The weighting of multiple realizations and the intrinsically weighted 
means coincide in the following sense:
Let $\Phi_1, \ldots, \Phi_n$ be iid copies of an MPP $\Phi = \{(t_i,
y_i, 1) : i\in\Nb\}$, for which the second mark component equals 1 for
all points. Then the weighting of realizations via $w_i(\Phi_i, \T)$
in the estimator \eqref{est_2} can alternatively be captured by the
second mark component. For $i=1, \ldots, n$, let $\tilde\Phi_i =
\big\{(t, y, w_i^{\rel}(\Phi_i, \T)) : (t,y,1)\in\Phi_i\big\}$, where
$w_i^{\rel}(\Phi_i, \T) = w_i(\Phi_i, \T) / \sum_{k=1}^n w_k(\Phi_k,
\T)$.  Let $\Psi_n$ be the concatenation of the processes
$\tilde\Phi_1, \ldots, \tilde\Phi_n$, each restricted to the
observation window $[\0, \T]$ and concatenated with a buffer of
$\max(I)$ and such that all points of $\Psi_n$ are contained in $[\0,
  \T_n]$ for some $\T_n\in\R^d$.  Then, with $\bw=(w_i(\Phi_i,
\T))_{i=1}^n$, we have $$\hat\mu_f(I, \Psi_n, \T_n) = \hat\mu_f^{n,
  \weighted}(I, \bw, (\Phi_1,\ldots,\Phi_n), \T).$$
\end{rem}

We close this section with a note on the estimation of $\mu_f^{(2)}(r)$
and $\tilde\mu_f^{(2)}(r)$, $r\in\R$. 
\begin{rem}
For most MPPs used in
applications, finding two points of an MPP with a fixed distance $r$
within a bounded observation window, has probability zero.
Then the simplest approach is to apply any of the estimators (\ref{est_0}),
(\ref{est_2}), (\ref{est_1}) or (\ref{est_concat}), with $I$ being a
small interval containing $r$, e.g., $[r-\delta, r+\delta]$ for some
$\delta>0$.  This is equivalent to use (Nadaraya-Watson) kernel
regression with the rectangular kernel, applied to the tuples
$\{(z_1f(y_1),\, \dis(t_2 -t_1)) : (t_1, y_1, z_1), (t_2, y_2, z_2) \in\Phi \}$,
where $\dis(x)=x$ if $x\in\R^1$ and $\dis(x)=\|x\|$ if $x\in\R^d$ with
$d>1$.\\
An obvious generalization is to replace the rectangular kernel by a
general kernel $K_h$ with bandwidth $h$. 
For the basic estimator (\ref{est_0}), this 
yields 
\begin{align*}
  \hat\mu_f(r, \Phi, T)= \frac{\sum^{\neq}_{(t_1, y_1, z_1), (t_2, y_2, z_2) \in\Phi\ t_1\in [0,\, T]} 
    z_1 f(y_1) K_h(r-\dis(t_2-t_1))
  }{\sum^{\neq}_{(t_1, y_1), (t_2, y_2) \in\Phi,\ t_1\in [0,\, T]} K_h(r-\dis(t_2-t_1))
  },
\end{align*}
likewise for the other estimators.  If the support of $K_h$ covers the
whole real line, the denominator is always strictly larger than zero,
which simplifies implementation, but also allows $\hat\mu_f(r, \Phi,
T)$ to be driven by pairs of points whose distance differs largely
from $r$.
\end{rem}

\section{Application to continuous-space processes}
\label{sec:relation_geostatistics}

Picking up the introductory example on continuous-space processes,
taking measurements from such a process with measurement locations
that are possibly irregularly spaced but independent of the underlying
process, leads to a subclass of MPPs.  
At the same time, particularly developed
in the geostatistical context, there exist numerous methods of
inference for continuous-space processes, including methods to account
for biased and preferential sampling.  We compare the concept of
intrinsically weighted means of MPPs to statistical methods for
continuous-space processes in the following.

One of the classical problems in geostatistical applications (e.g.,
\cite{Chiles1999}) is prediction of averages from measurements
$\{(t_i, Y(t_i)): i=1, \ldots, n\}$, where $\{Y_t : t\in T\}$,
$T\subset\R^d$, is a latent second-order stationary random field.
When predicting global moments of $Y$, redundancies in the data can be
excluded via the spatial correlation structure, e.g., the best linear
unbiased estimator (BLUE) for $\Eb Y$ is well-known to be $(\mathbf 1'
\Sigma^{-1} \mathbf 1)^{-1} \cdot \mathbf 1' \Sigma^{-1} \mathbf Y$,
where $\1 = (1, \ldots, 1)'$, $\mathbf Y=(Y(t_1), \ldots, Y(t_n))'$
and $\Sigma=\Cov(Y(t_i), Y(t_j))_{i,j=1}^n$ (e.g.,
\cite[p.179]{Chiles1999}).
More generally, any estimator that
is linear in a transformation $g$ of the data allows
for assigning a different weight to each data point; then the
estimator takes the form $\sum_{i=1}^n z_i g(Y(t_i))$ or
$\sum_{i,j=1}^n z_{ij} g(Y(t_i), Y(t_j))$ (similarly for higher-order
moments).  The weights $z_i$ and $z_{ij}$ are supposed to capture the
spatial or temporal pattern of measurement locations when statistical
inference from irregularly spaced data is carried out.  Similar
weighting procedures are used for declustering and debiasing methods,
cf.\ \cite{Isaaks1989}.
\begin{assert}
Identifying the geostatistical weights $z_i$ with the $z$-component of
the marked point process $\Phi=\{(t_i,y_i,z_i) : i\in\Nb\}$, the
estimator $\sum_{i=1}^n z_i g(Y(t_i))$ of $\Eb g(Y)$ coincides with
the canonical estimator for the weighted mean mark $\mu_f^{(1)}$,
defined by \eqref{mu_firstorder}.
\end{assert}

The geostatistical guiding principle of choosing optimal weights for
aggregation of measurements adheres to the idea that a) there exists
an underlying random field and b) that this field can be measured at
any location without causally influencing the other measurements. It
is important to note that this is far from being satisfied for
processes in which the measurements reflect physical objects that
interact with each other.  Trees in a forest, for example, compete for
resources and if another tree had been added at some point, the
measured characteristics of the surrounding trees would have likely
changed.  Though, with increasing distance, interaction effects
between single objects of an MPP may become negligible and the random
field assumption might be sensible on a larger scale. This perspective
motivates combining classical mean mark estimators for MPPs of the
form $\Phi=\{(t_i,y_i,1) : i\in\Nb\}$ with a geostatistical weighting.
Partitioning the observation window in smaller parts, we assign a
$z$-component to $\Phi$ such that $z_i=z_j$ whenever $t_i$ and $t_j$
belong to the same cell of the partition.  This leads to a classical
unweighted average within each cell and therewith maintains the
information contained in the small-scale pattern of the point
locations.  Between the different cells, we allow for a weighting in
the geostatistical sense and therewith allow to smooth out large-scale
irregularities in the distribution of point locations. We denote the
resulting estimator by $\hat\mu_f^{(1), \geo}$.
\begin{assert}
Considering a realization of $\Phi$ as a collection of realizations of
a possibly non-ergodic MPP on smaller observation windows
corresponding to the above partition, the form of $\hat\mu_f^{(1),
  \geo}$ coincides with that of $\hat\mu_f^n$ and $\hat\mu_f^{n,
  \weighted}$, which estimate the average mean mark 
$\tilde\mu_f^{(1)}$ (see Definition \ref{def:mu_non-erg}) instead of 
the classical mean mark $\mu_f^{(1)}$.
\end{assert}
The application of such a weighting scheme is particularly of interest 
when  the underlying process jumps between different
regimes that differ substantially from each other, e.g., w.r.t.\ the
intensity of point locations. 
In summary, applying the geostatistical idea of declustering in the
MPP context in a sense corresponds to the concept 
of non-ergodic modeling.

\medskip

To avoid possible confusion, we conclude this section with a final
remark. 
\begin{rem}
For certain choices of $f$, the random field counterpart of
$\mu_f^{(2)}$ is well-defined. For $f(y_1, y_2) =y_1y_2$, for
instance, the counterpart is the ordinary (non-centered) 
covariance function.  If $f$
only depends on one of the two marks of a pair of points,
$\mu_f^{(2)}$ implicitly conditions on the existence of other points
and there is no sensible way of interpreting a suchlike statistic in a
random field context, where there exist values at all points of the
index space. Nevertheless, the geostatistical idea of
variance-minimizing weights can be applied to $\mu_f^{(2)}$ by a
simple mean squared error approach.
\end{rem}

\section{Discussion}\label{sec:discussion}
The MPP summary statistics considered in this paper are (weighted)
mean marks. In practice, the choice of weights is not always clear, for 
example when data from different stochastic sources are combined. In
Section \ref{sec:relation_geostatistics}, we point out that, if there
was an underlying continuous-time process from which the data were
generated by a random sampling procedure, then the \emph{mean of
interest} would rather be the temporal average over the whole index
space instead of the average over all sampling locations. The weights 
might then be chosen to compensate for the irregular distribution of point
locations. Though, the assumption
of a continuous-time background process is problematic if the points
represent physical objects that influence each other. Then, the 
mean of interest might include the randomness of the point pattern, 
as it is reflected by the MPP moment measures $\alpha_f^{(2)}$.\\
Related questions arises when multiple realizations of a non-ergodic
MPP are considered: Should the definition of mean include possibly
different intensities of points between different ergodicity classes
or not?  A non-ergodic MPP can be seen as a hierarchical model and
expectation functionals w.r.t.\ the point process can naturally be
replaced by two-step expectations by averaging within each ergodicity
class first and then aggregating the different classes (cf.\
Section \ref{sec:nonergodic_def}). This alternative definition
filters out the
differences w.r.t.\ the point location patterns between different ergodicity
classes. Which definition of mean should be chosen eventually
depends on the purpose of the characteristic at hand and on the
intended interpretation.

%% Although this paper contributes to the understanding of the relation
%% between geostatistical weighting methods, weighted averages for
%% estimation of MPP moment measures, and non-ergodic modeling of MPPs,
%% this topic is still not comprehensively understood.
%% A related question, which to our knowledge is still open,
%% is the following: Is it possible to distinguish, by means of suitable
%% summary statistics, \emph{genuine} MPPs in which objects physically
%% interact with each other from MPPs that result from a possibly
%% dependent sampling of a random field?

%%%%%%%%%%%%%%%%%%%%%%%%%%%%%%%%%%%%%%%%%%%%%%%%%%%%%%%%%%%%%%%%%%%%%%%%%%

\appendix
\section{Ergodic theory}\label{ergodic_theory}

Ergodicity is a mixing property that can be defined in the very
general context of dynamical systems. A MPP on $\R^d$ together with the
group of $\R^d$-indexed shift operators is a special case of a
dynamical system.

We denote by $\M_0$ the set of all locally finite counting measures on
$\R^d \times \R$, and by $\MM_0$ the smallest $\sigma$-algebra on
$\M_0$ that makes all mappings $\M_0\rightarrow\Nb_0\cup \infty$,
$\varphi\mapsto\varphi(S)$, measurable.  Formally, a MPP $\Phi$ is a
measurable mapping from some probability space $(\Omega, \A, P)$ into
$(\M_0, \MM_0)$ and we can identify $(\Omega, \A)$ with $(\M_0,
\MM_0)$ in the usual way. Let $\T = \{T_x : x\in\R^d \}$ with 
\begin{align}
(T_x \varphi)(B\times L) = \varphi((B+x), L), \qquad B\in \BB^d, L\in\R.
\label{MPPshift}
\end{align}
Recall that $\Phi$ is said to be stationary if the induced probability
measure $P^\Phi$ is $\T$-invariant.  Further, a stationary MPP $\Phi$
is called ergodic if $P^\Phi(A)$ is either zero or one for all
$\T$-invariant sets $A\in\MM_0$. Let $\A_0\subset\MM_0$ be the
sub-$\sigma$-algebra of all $\T$-invariant sets in
$\MM_0$, i.e., $A=T^{-1}A$ for all $A\in\A_0$ and $T\in\T$.

The following theorem is commonly termed \emph{pointwise} or \emph{individual
ergodic theorem} in literature and establishes almost sure
convergence of a certain average of values of a random variable $X$.

\begin{defn}[\protect{Def.~12.2.I in \cite{Daley2008}}] \label{av_seq}
  An increasing sequence 
  of bounded convex Borel sets
  $W_n\subset\R^d$ is called \emph{convex averaging sequence in
    $\R^d$} if the maximal radius of a ball contained in $W_n$ goes
  to infinity if $n$ increases.
\end{defn}

\begin{thm}[\protect{Prop.~12.2.II \cite{Daley2008}}]\label{ind_erg}
  Let $(\Omega, \A, P)$ be a probability space and $\T = \{T_x : x\in
  \R^d\}$ a group of measure-preserving transformations
  acting on $(\Omega, \A, P)$ such that the mapping $(T_x, \omega)
  \mapsto T_x\omega$ is jointly measurable, i.e., $(\BB(\T)\otimes\A,
  \A)$-measurable. (Multiplication in $\T$ is given by $T_xT_y =
  T_{x+y}$.)  Let $\{W_n\}_{n\in\Nb}$ be a convex averaging sequence in $\R^d$
  and $\A_0$ the $\sigma$-algebra of $\T$-invariant events. Then for all
  real-valued integrable functions $X$ on $(\Omega, \A, P)$
  \begin{align*}
    \bar X_n = \frac{1}{\nu(W_n)} \int_{W_n} X(T_x\omega) \,\nu(\dd
    x) \stackrel{\text{a.s.}}{\longrightarrow} \Eb(X\,|\,
    \A_0),\qquad n\rightarrow \infty. 
  \end{align*} 
  If $X$ is additionally $L_p$-integrable, then $\Eb(X\,|\,
    \A_0)$ is also the $L_p$-limit of $\bar X_n$.
\end{thm}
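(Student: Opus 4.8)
The plan is to follow the classical three-step scheme for pointwise ergodic theorems: first establish a maximal inequality, then verify convergence on a dense subclass of $L_1(\Omega, \A, P)$, and finally invoke the Banach principle to upgrade this to almost sure convergence for \emph{every} integrable $X$, after which the limit is identified as $\Eb(X \,|\, \A_0)$. A preliminary observation is that the joint-measurability hypothesis on $(T_x, \omega)\mapsto T_x\omega$ guarantees, via Fubini's theorem, that each average $\bar X_n$ is a genuine $\A$-measurable random variable, so the statement is well-posed.

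First I would establish a weak-type maximal inequality for the maximal function
\begin{align*}
  X^*(\omega) = \sup_{n\in\Nb} \left| \bar X_n(\omega) \right|
  = \sup_{n\in\Nb} \frac{1}{\nu(W_n)} \left| \int_{W_n} X(T_x\omega)\,\nu(\dd x)\right|,
\end{align*}
namely a bound of the form $P(X^* > \lambda) \leq c_d\,\lambda^{-1}\,\Eb|X|$ for all $\lambda>0$, with a constant $c_d$ depending only on the dimension $d$. The key geometric input is the convexity of the averaging sets $W_n$: convex bodies in $\R^d$ enjoy a uniform doubling property and admit Vitali-type covering arguments, which allow one to transfer Wiener's covering lemma on $\R^d$ to the measure-preserving $\T$-action. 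This transference from the Euclidean maximal function to the ergodic maximal function is the technical heart of the argument and the step I expect to be the main obstacle, since for general convex $W_n$ --- as opposed to balls or cubes --- controlling the overlap in the covering requires the full strength of the convex-geometry estimates rather than an elementary one-dimensional rising-sun argument.

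Next I would verify convergence on a dense subclass. Invariant functions $X = \Eb(X\,|\,\A_0)$ satisfy $\bar X_n = X$ for every $n$, so convergence is trivial there; on the closure of the span of ``coboundaries'' $X = g - g\circ T_v$ (with $g$ bounded and $v\in\R^d$), the averages tend to $0$, since a telescoping estimate bounds $|\bar X_n|$ by $\|g\|_\infty\,\nu\bigl(W_n \triangle (W_n + v)\bigr)/\nu(W_n)$, and the right-hand side vanishes because a convex averaging sequence is a F\o lner sequence. The von Neumann orthogonal decomposition shows that invariants together with these coboundaries are dense in $L_2$, hence (by density of $L_2\cap L_1$) in $L_1$. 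The Banach principle then combines the maximal inequality with this dense convergence to yield almost sure convergence of $\bar X_n$ for every $X\in L_1(\Omega, \A, P)$.

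It remains to identify the limit $\bar X_\infty = \lim_{n} \bar X_n$. Because averaging over the increasing sets $W_n$ washes out the shift, $\bar X_\infty$ is $\T$-invariant and hence $\A_0$-measurable. For any invariant set $A\in\A_0$, the measure-preservation of $\T$ gives $\int_A \bar X_n\,\dd P = \int_A X\,\dd P$ for each $n$; passing to the limit --- justified for bounded $X$ by dominated convergence through $X^*$ and then extended to all of $L_1$ by the maximal inequality --- yields $\int_A \bar X_\infty\,\dd P = \int_A X\,\dd P$ for all $A\in\A_0$, which is exactly the defining property $\bar X_\infty = \Eb(X\,|\,\A_0)$. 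Finally, under the stronger hypothesis $X\in L_p$, the maximal inequality upgrades to the strong $(p,p)$ bound $\|X^*\|_p \leq c_{d,p}\,\|X\|_p$, so $X^*\in L_p$ dominates the sequence $(\bar X_n)$ and the already-established almost sure convergence promotes to $L_p$-convergence by dominated convergence.
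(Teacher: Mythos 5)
The paper does not actually prove this statement: it is imported as Prop.~12.2.II of \cite{Daley2008} and used as a black box, so there is no internal argument to measure yours against. Your sketch is a correct reconstruction of the standard proof of the multiparameter pointwise ergodic theorem --- weak-type maximal inequality, convergence on the dense subclass of invariant functions plus coboundaries supplied by the von Neumann decomposition, the Banach principle, and identification of the limit via $\int_A \bar X_n \,\dd P = \int_A X \,\dd P$ for $A\in\A_0$. The F\o lner property of convex averaging sequences (symmetric differences of translates are boundary-dominated once the inradius diverges), the Fubini argument making $\bar X_n$ measurable, and the $L_p$ upgrade via the strong $(p,p)$ maximal bound are all handled correctly.

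The one step where your justification is not quite right is the one you yourself flag as the technical heart. A single convex body does not have a useful ``uniform doubling property,'' and convexity alone is not what rescues the $L_1$ statement: the strong maximal operator over \emph{all} axis-parallel rectangles is convex-generated yet fails to be of weak type $(1,1)$. What actually makes the Vitali-type covering work is that a convex averaging sequence is \emph{nested}. For nested convex sets one has an engulfing lemma: if $W_m\subset W_n$ and $(x+W_m)\cap(y+W_n)\neq\emptyset$, then $x+W_m\subset y+(W_n-W_n+W_n)$, and $\nu(W_n-W_n+W_n)\leq c_d\,\nu(W_n)$ with a purely dimensional constant (e.g.\ via the Rogers--Shephard difference-body inequality, after noting $W-W+W\subset c+2(W-W)$ for any $c\in W$). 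This gives the covering lemma for the fixed nested family, hence the weak $(1,1)$ bound after Calder\'on transference; the rest of your argument then goes through. With that correction your outline matches the proofs in the sources Daley and Vere-Jones themselves rely on (Tempelman, Nguyen--Zessin).
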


\begin{rem}
  If $P$ is ergodic (i.e., $P(A)\in\{0, 1\} \ \forall A\in\A_0$)
  then $\Eb(X\,|\, \A_0)$ reduces to the constant $\Eb X$. Loosely
  speaking, this means that a suitable average over transformations
  of a single realization converges to the expectation over the state
  space~$\Omega$.
\end{rem}

While Theorem \ref{ind_erg} refers to a general probability space with
a general group of transformations action on it, the following
Proposition relates this results to the context of MPPs on $\R^d$, in
which the transformations $T_x$, $x\in\R^d$, are given by shifts of
the whole point pattern by the vector $x$. Here, the point is that the
index $x\in\R^d$ has a direct geometric meaning when $T_x$ is applied
to a realization $\varphi$ of $\Phi$. This yields convergence of
spatial averages within a single realization of the MPP to the state space
mean.

The proof of the following Proposition is based on a simple sandwich
argument, which can also be used for other consistency statements. We
include the proof here, because to our knowledge, it is not available
in this form in pertinent literature. A similar assertion can be found 
in \cite[Thm.~12.2.IV]{Daley2008}.

\begin{prop}\label{cor_averaging2}
Let $\Phi$ be stationary and ergodic and $\T$ as in Theorem
\ref{ind_erg}.  Let $f:\R^d \times \R \times \M_0\rightarrow\R$ be a
non-negative function that satisfies $f(t-x, y, T_x\varphi) = f(t, y,
\varphi)$ for all $t, x\in\R^d, y\in\R$, and that is integrable
w.r.t.\ to the marked Campbell measure $C(B\times L \times M) =
\Eb\bigl[\Phi((B\cap[0, 1]^d)\times L)\1_M(\Phi)\bigr]$, $B\in\BB^d$,
$L\in\LL$, $M\in\MM_0$. We define random variables $X, X_n : \M_0
\rightarrow \R$ by
\begin{align*}
  X(\varphi) &=   \sum_{(t, y) \in\varphi, \ t\in [0,\, 1]^d} f(t, y, \varphi) \\
  X_n(\varphi) &=  \frac{1}{n^d} \sum_{(t, y) \in\varphi, \ t\in [0, n]^d} 
  f(t, y, \varphi).
\end{align*}
Then $X_n$ converges to $\Eb^\Phi X$ almost surely if $n \rightarrow \infty$.
\begin{proof}
An extension of the classical Campbell theorem 
(e.g., Lem.~13.1.II in \cite{Daley2008}) guarantees that $\Eb|X|<\infty$ if
$f$ is integrable w.r.t.\ the Campbell measure.  The $W_n=[0, n]^d$
obviously form an averaging sequence and
\begin{align}
  X_n(\varphi) 
  &=  \frac{1}{\nu(W_n)} \sum_{(t, y) \in\varphi, \ t\in W_n} f(t, y, \varphi)
  \int_{\R^d} \1_{[t,\, t+1]}(x)\, \nu(\dd x) \notag\\
  &=  \frac{1}{\nu(W_n)} \int_{\R^d} 
   \sum_{(t, y) \in\varphi, \ t\in W_n \cap [x-1, \, x]}
   f(t, y, \varphi) \, \nu(\dd x), \label{xn}
\end{align}
where $x\pm 1$ for $x\in\R^d$ is defined component-wise.  Note that
the integrand on the RHS equals 0 whenever $W_n \cap [x-1,
  \, x] = \varnothing$, which means that $x$ is not contained in $W_n
\oplus [0, 1]^d$, which is, on its part, a subset of $W_{n+1}$. Thus,
we can shrink the region of integration to $W_{n+1}$ without changing
the integral. If we then drop the condition `$t\in W_n$' under the
summation sign, we enlarge the whole expression since $f$ is
non-negative, i.e.\
\begin{align}
  X_n(\varphi) &\leq    
  \frac{1}{\nu(W_n)} \int_{W_{n+1}} \sum_{(t, y) \in\varphi, \ t\in [x-1, \, x]}
  f(t, y, \varphi) \, \nu(\dd x) \notag\\
  &= \frac{1}{\nu(W_n)} \int_{W_{n+1}} \sum_{(t, y) \in T_{x-1}\varphi, \ t\in [0, \, 1]^d}
  f(t, y, T_{x-1}\varphi) \, \nu(\dd x)  \notag\\
  &= \frac{\nu(W_{n+1})}{\nu(W_n)} \frac{1}{\nu(W_{n+1})} 
  \int_{W_{n+1}-1} X(T_x\varphi) \, \nu(\dd x),  \label{upper_b}
\end{align}
where the second equation uses that $f(t-x, y, T_x\varphi) = f(t, y,
\varphi)$ and the last equation uses that $\nu$ is shift-invariant.
Since the ratio $\nu(W_{n+1})/\nu(W_n)$ converges to 1, Theorem
\ref{ind_erg} yields that the RHS of (\ref{upper_b})
converges to $\Eb^\Phi(X\,|\, \A_0)$ for almost all
$\varphi\in\M_0$. Since $\Phi$ was assumed to be ergodic, this
conditional expectation equals $\Eb^\Phi X$.\\ Similarly, if we
restrict integration in (\ref{xn}) to the set $W_{n-1}$, we reduce the
value of the integral. Since $W_{n-1} \oplus [-1, 0]^d \subset W_n$, we
can again drop the condition `$t\in W_n$' under the summation sign and
by the same argument as before, we have
\begin{align*}
  X_n(\varphi) &\geq    
  \frac{1}{\nu(W_n)} \int_{W_{n-1}} \sum_{(t, y) \in\varphi, \ t\in [x, \, x+1]}
  f(t, y, \varphi) \, \nu(\dd x) \
   \stackrel{n\rightarrow\infty}{\longrightarrow}  \
  \Eb^\Phi X 
\end{align*}
for almost all $\varphi\in\M_0$.  Thus, we have a sandwich relation
for $X_n(\varphi)$ and can conclude that $X_n \rightarrow \Eb^\Phi X$
a.s.
\end{proof}
\end{prop}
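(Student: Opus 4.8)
The plan is to reduce this point-process statement to the general pointwise ergodic theorem (Theorem \ref{ind_erg}) applied to the single random variable $X$, bridging the gap between the discrete sum defining $X_n$ and the continuous spatial average occurring in that theorem by a sandwich argument. As a preliminary, I would record that integrability of $f$ with respect to the marked Campbell measure $C$, via the extended Campbell theorem, yields $\Eb|X|<\infty$, so that Theorem \ref{ind_erg} is applicable to $X$ along the convex averaging sequence $W_n=[0,n]^d$.

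The central step is to rewrite $X_n$ in integral form. Since $\int_{\R^d}\1_{[t,\,t+1]}(x)\,\nu(\dd x)=1$ for every $t$, I would insert this factor into each summand and interchange the non-negative sum with the integral to obtain
\begin{align*}
X_n(\varphi)=\frac{1}{\nu(W_n)}\int_{\R^d}\sum_{(t,y)\in\varphi,\ t\in W_n\cap[x-1,\,x]} f(t,y,\varphi)\,\nu(\dd x).
\end{align*}
The covariance property $f(t-x,y,T_x\varphi)=f(t,y,\varphi)$ then lets me recognize the inner sum, after the substitution $t\mapsto t+x-1$, as $X(T_{x-1}\varphi)$ once the membership constraint $t\in W_n$ has been removed. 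Because $f\ge 0$, dropping the constraint $t\in W_n$ can only enlarge the integrand; combined with the observation that the integrand vanishes whenever $x\notin W_n\oplus[0,1]^d\subset W_{n+1}$, this produces the upper bound
\begin{align*}
X_n(\varphi)\le\frac{\nu(W_{n+1})}{\nu(W_n)}\cdot\frac{1}{\nu(W_{n+1})}\int_{W_{n+1}-1} X(T_x\varphi)\,\nu(\dd x).
\end{align*}

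A symmetric estimate, restricting the integration to $W_{n-1}$ (for which $[x,\,x+1]\subset W_n$ already holds, so the constraint may again be dropped harmlessly) gives a matching lower bound. Invoking Theorem \ref{ind_erg}, both the upper and lower integral averages converge almost surely to $\Eb^\Phi(X\,|\,\A_0)$, which equals $\Eb^\Phi X$ by ergodicity, while the volume ratios $\nu(W_{n\pm1})/\nu(W_n)$ tend to $1$. The sandwich then forces $X_n\to\Eb^\Phi X$ almost surely.

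I expect the main obstacle to be the bookkeeping of the boundary region rather than any deep analytic difficulty: one must verify precisely that the support of the integrand sits inside $W_{n+1}$ for the upper bound, and that restricting to $W_{n-1}$ legitimately removes the constraint $t\in W_n$ for the lower bound. It is exactly the non-negativity of $f$, together with these geometric inclusions, that renders both one-sided estimates valid and lets the two bounds collapse to the same limit.
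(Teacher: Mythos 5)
Your proposal reproduces the paper's own argument essentially step for step: the Campbell-measure integrability giving $\Eb|X|<\infty$, the rewriting of $X_n$ via $\int_{\R^d}\1_{[t,\,t+1]}(x)\,\nu(\dd x)=1$, the use of the covariance property to identify the inner sum with $X(T_{x-1}\varphi)$, the two one-sided bounds via the inclusions into $W_{n+1}$ and $W_{n-1}$, and the concluding sandwich with Theorem \ref{ind_erg}. The approach and all key steps match the paper's proof, so there is nothing further to compare.
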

Note that the convex averaging sequence $\{[0, n]^d\}_{n\in\Nb}$ in
Proposition \ref{cor_averaging2} can be replaced by any sequence
$\{W \oplus nV\}_{n\in\Nb}$ with  
$W$ a bounded Borel set and $V\subset\R^d$ a convex and
bounded set with $\nu(V)>0$ and $0\in V$.
\bigskip

In case that $\Phi$ is not ergodic, the following results provide a
representation of $\Phi$ as a mixture of a set of ergodic MPPs.
To this end, let $\cP$ ($\cP_{\erg}$ resp.) denote the set of all
probability measures on $(\M_0, \MM_0)$ induced by stationary (and
ergodic) MPPs and let $\Pi_{\erg}$ be the smallest $\sigma$-algebra
making all mappings $\cP_{\erg}\rightarrow [0, 1]$, $P\mapsto P(A)$,
measurable.  We say that $\T$ fulfills the condition (LocCompGrp) if
$\T$ is a locally compact, second-countable Hausdorff group of jointly
measurable, surjective transformations.

From \cite{Farrell1962} 
we can extract the very general result

\begin{thm}\label{decomp_theorem}
  Let $(\Omega, \A)$ be a measurable space with $\Omega$ a complete
  separable metric space and $\A$ its Borel-$\sigma$-algebra. Let $\T$
  be a set of measurable transformations of $\Omega$ satisfying the
  condition (LocCompGrp) and let $P\in\cP$.  Here, $\cP$ ($\cP_{\erg}$
  resp.) is the set of all $\T$-invariant (and ergodic) probability
  measures on $(\Omega, \A)$. Then there is a unique probability
  measure $\lambda_P$ on $(\cP_{\erg}, \Pi_{\erg})$ and a
  $\cP_{\erg}$-valued random variable $Q_P$ s.t.
  $$P(A) = \int_{\cP_{\erg}} Q(A) \, \lambda_P(\dd Q)
  = \int_\Omega Q_P(\omega)(A) \, P(\dd \omega) \qquad \forall
  A\in\A,$$
  i.e.,  $\lambda_P$ is the distribution of $Q_P$. 
\end{thm}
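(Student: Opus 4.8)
The plan is to realize the decomposition through regular conditional probabilities with respect to the invariant $\sigma$-algebra $\A_0$ of $\T$-invariant events, and to use the individual ergodic theorem (Theorem \ref{ind_erg}) to force the resulting conditional laws to be ergodic rather than merely invariant. Since $\Omega$ is a complete separable metric space, the space $(\Omega,\A)$ is standard Borel, so for the sub-$\sigma$-algebra $\A_0\subset\A$ a regular conditional probability $Q_P(\omega)(\cdot)=P(\cdot\,|\,\A_0)(\omega)$ exists. This immediately supplies a map $\omega\mapsto Q_P(\omega)$ together with the identity $P(A)=\int_\Omega Q_P(\omega)(A)\,P(\dd\omega)$ for all $A\in\A$, which is the second of the two claimed representations, and measurability of this map into $(\cP_{\erg},\Pi_{\erg})$ will be automatic once the range is shown to be ergodic, since $\Pi_{\erg}$ is generated by the evaluations $Q\mapsto Q(A)$ and each $\omega\mapsto Q_P(\omega)(A)$ is $\A_0$-measurable by construction.

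Next I would verify the two defining properties of $Q_P(\omega)$. For $\T$-invariance I would fix $T\in\T$ and combine the fact that $T$ preserves $\A_0$ with the $\T$-invariance of $P$ to conclude that $Q_P(\omega)\circ T^{-1}=Q_P(\omega)$ for $P$-almost all $\omega$. For ergodicity I would invoke Theorem \ref{ind_erg}: averaging a bounded measurable $X$ over a convex averaging sequence converges $P$-almost surely to $\Eb(X\,|\,\A_0)$, an $\A_0$-measurable and hence $Q_P(\omega)$-almost surely constant function; comparing this with the limit predicted by the ergodic theorem applied under the invariant measure $Q_P(\omega)$ itself forces every $\T$-invariant set to have $Q_P(\omega)$-measure in $\{0,1\}$, i.e.\ $Q_P(\omega)\in\cP_{\erg}$. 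Running this argument over a countable family of functions generating $\A$ (available by separability) and intersecting the corresponding full-measure sets yields ergodicity for $P$-almost all $\omega$. Defining $\lambda_P$ as the distribution of $Q_P$ under $P$ and applying the change-of-variables formula then converts the second representation into the first, $P(A)=\int_{\cP_{\erg}}Q(A)\,\lambda_P(\dd Q)$.

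For uniqueness I would exploit that two distinct ergodic measures are mutually singular: if $P=\int_{\cP_{\erg}}Q(\cdot)\,\lambda(\dd Q)$ for some $\lambda$, then for every bounded $\T$-invariant $X$ the conditional expectation $\Eb(X\,|\,\A_0)$ must coincide $P$-almost surely with the $\A_0$-measurable function $\omega\mapsto\int X\,\dd Q_P(\omega)$, which pins down $\lambda$ as the law of $Q_P$ and hence identifies it with $\lambda_P$. The hard part throughout is the passage from ``for each fixed $T\in\T$, almost surely'' to ``almost surely, for all $T\in\T$ simultaneously'' over the uncountable group $\T$; this is exactly where the condition (LocCompGrp) enters, furnishing a countable dense subset and the joint measurability needed to control the continuum of transformations with a single $P$-null exceptional set. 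As the preceding sentence indicates, the cleanest route is to observe that all of these steps are subsumed by the general measure-theoretic decomposition of \cite{Farrell1962}, so that it suffices to check that the hypotheses of the present theorem specialize those of Farrell and to quote his result.
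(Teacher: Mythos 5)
The paper does not actually prove this theorem: it is presented as a result extracted from \cite{Farrell1962}, with no argument beyond checking that the MPP setting (shifts on $\M_0$, which is a complete separable metric space with $\MM_0$ its Borel-$\sigma$-algebra) satisfies the hypotheses. Your closing sentence therefore coincides with the paper's entire ``proof.'' What you add on top is a self-contained outline of the classical construction --- disintegrate $P$ via a regular conditional probability given the invariant $\sigma$-algebra $\A_0$, use the individual ergodic theorem over a countable generating family to show the conditional laws are $P$-a.s.\ ergodic, and derive uniqueness from the mutual singularity of distinct ergodic measures --- which is the standard route and is sound in outline. The delicate points are the ones you flag: passing from ``for each fixed $T\in\T$, a.s.'' to ``a.s., for all $T$'' over the uncountable group (this is where (LocCompGrp) supplies separability and joint measurability), and, one you do not mention explicitly, that $\A_0$ need not be countably generated, so the argument that $\A_0$-measurable functions are $Q_P(\omega)$-a.s.\ constant must be run on a countably generated $\sigma$-algebra agreeing with $\A_0$ modulo $P$-null sets. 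These are precisely the technicalities the paper outsources to Farrell; your sketch identifies them correctly but does not discharge them, so as a complete proof it would need that extra work, whereas as a reduction to \cite{Farrell1962} it matches the paper.
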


In the context of MPPs on $\R^d$, the group $\T$ of shifts, as defined
in (\ref{MPPshift}), obviously fulfills the condition (LocCompGrp),
and since $\M_0$ is a complete separable metric space and $\MM_0$ its
Borel-$\sigma$-algebra (e.g., \cite{Kallenberg1983}), Theorem
\ref{decomp_theorem} can directly be applied, which yields a
decomposition of the non-ergodic MPP $\Phi\sim P$:
$$P(M) = \int_{\cP_{\erg}} Q(M) \, \lambda(\dd Q) \qquad
\forall M\in\MM_0.$$  
Note that each $Q$ induces a new ergodic MPP $\Phi_Q :
\Omega\rightarrow\M_0$ which is given implicitly by $P(\Phi_Q \in M) =
Q(M)$, $M\in\MM_0$.
By the second representation in Theorem~\ref{decomp_theorem}, we can
also consider $Q$ as a random variable on $(\M_0, \MM_0, P)$
with distribution $\lambda = \lambda_{P}$. Thus, $\Phi$ and
$Q^\Phi$ have a joint distribution and the conditional distribution of
$\Phi$ given $Q$ is well-defined:
\begin{align*}
P(\cdot \,|\, Q = Q^*) = Q^*(\cdot). 
\end{align*}

\section{Proof of Theorem \ref{CLT_MPP}}\label{app_proof_CLT}

The following lemma generalizes the classical individual ergodic
theorem \cite[Prop.\ 12.2.II]{Daley2008} to a situation in which the
thinning of the point process depends on the size of the observation
window.
\begin{lem}
\label{ergodic_theorem}
Let $\Phi$ be a stationary and ergodic MPP on $\R$ with real-valued
marks and let $(u_T)_{T\geq 0}$ be a family of non-negative
non-decreasing numbers such that 
\begin{align}
  \frac{T^{-1}\hat\alpha_{1}(I, \Phi, T) -\lambda}{
  \Eb_{\Phi}\hat\alpha_{f_{\condit, {u_T}}}(I, \Phi, 1)} 
\to 0 \quad \text{a.s.} \ (T\to\infty).
\label{conv_speed}
\end{align}
Then, for $T\to\infty$, we have the
almost sure convergence
\begin{align*}
\frac{\hat\alpha_{f_{\condit, {u_T}}}(I, \Phi, T)}{
  T \Eb_{\Phi}\hat\alpha_{f_{\condit, {u_T}}}(I, \Phi, 1)} 
\longrightarrow 1.
\end{align*}
\end{lem}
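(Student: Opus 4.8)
The plan is to read the conclusion as a ratio‑ergodic (strong‑law) statement for the thinned pair count $\hat\alpha_{f_{\condit,u_T}}(I,\Phi,T)$, whose expectation is exactly $T\,\lambda_{u_T}$ with $\lambda_{u}=\Eb_{\Phi}\hat\alpha_{f_{\condit,u}}(I,\Phi,1)$. Thus it suffices to show $\hat\alpha_{f_{\condit,u_T}}(I,\Phi,T)\big/\bigl(T\,\lambda_{u_T}\bigr)\to 1$ almost surely. The obstruction to invoking Proposition \ref{cor_averaging2} directly is that the integrand $f_{\condit,u_T}$ depends on $T$; the individual ergodic theorem only governs a fixed function, so the moving threshold must be removed first.

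First I would set up a sandwich along a countable grid. The count $\hat\alpha_{f_{\condit,u}}(I,\Phi,T)$ is non‑decreasing in the window $T$ and non‑increasing in the threshold $u$, and $(u_T)$ is non‑decreasing; hence, choosing an increasing grid $T_k\to\infty$, for every $T\in[T_k,T_{k+1}]$ one has $\hat\alpha_{f_{\condit,u_{T_{k+1}}}}(I,\Phi,T_k)\le \hat\alpha_{f_{\condit,u_T}}(I,\Phi,T)\le \hat\alpha_{f_{\condit,u_{T_k}}}(I,\Phi,T_{k+1})$. Both bounds carry a threshold that is fixed within the block, so Proposition \ref{cor_averaging2} applies to each grid term and gives $\hat\alpha_{f_{\condit,u_{T_k}}}(I,\Phi,T_{k+1})\approx T_{k+1}\lambda_{u_{T_k}}$, and analogously for the lower bound. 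What remains is to verify that both brackets agree with $T\,\lambda_{u_T}$ after normalisation.

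The discrepancy splits into a boundary‑strip error (pairs with first point in $(T_k,T_{k+1}]$), bounded by $\hat\alpha_1(I,\Phi,T_{k+1})-\hat\alpha_1(I,\Phi,T_k)\approx\lambda(T_{k+1}-T_k)$ via Proposition \ref{cor_averaging2}, and a threshold‑gap error (pairs whose first mark lies in $(u_{T_k},u_{T_{k+1}}]$). This is exactly where \eqref{conv_speed} enters: it states precisely that the fluctuation $\hat\alpha_1(I,\Phi,T)-\lambda T$ of the \emph{total} pair count is of smaller order than the expected number of exceedance pairs $T\,\lambda_{u_T}$. Refining the grid so that $T_{k+1}/T_k\to 1$ then makes the boundary error negligible relative to $T_k\lambda_{u_{T_k}}$, since the decay of $\lambda_{u_T}$ is controlled by \eqref{conv_speed} rather than only by the crude $o(T)$ rate of the ordinary ergodic theorem.

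The hard part will be the threshold‑gap error in the genuinely non‑trivial regime $u_\infty=\infty$ (equivalently $\lambda_{u_T}\to 0$), where $T\,\lambda_{u_T}$ grows more slowly than $T$ and no fixed‑threshold sandwich can track $u_T$. Here I would condition on the ground process $\Phi_{\g}$ and, using the independence of marks and locations of Condition \ref{RFmodel}, write $\hat\alpha_{f_{\condit,u_T}}(I,\Phi,T)-T\lambda_{u_T}$ as a location part $\Eb[\,\cdot\mid\Phi_{\g}]-T\lambda_{u_T}$, controlled by Proposition \ref{cor_averaging2} together with \eqref{conv_speed}, plus a mark part $\hat\alpha_{f_{\condit,u_T}}(I,\Phi,T)-\Eb[\,\cdot\mid\Phi_{\g}]$. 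The latter is a centred sum of $m$‑dependent exceedance indicators whose conditional variance is of order $T\lambda_{u_T}$ (the minimum spacing $d_0$ bounds the number of partners, giving $m$‑dependence); since \eqref{conv_speed} forces $T\lambda_{u_T}\to\infty$ as soon as the total pair count genuinely fluctuates, a Chebyshev bound along the grid with Borel--Cantelli yields almost sure convergence, and monotonicity fills in the values between grid points. Combining the two one‑sided bounds shows that both the $\liminf$ and the $\limsup$ of the normalised quantity equal $1$, which is the assertion.
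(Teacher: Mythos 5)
There is a genuine gap. The lemma is stated for an \emph{arbitrary} stationary ergodic MPP on $\R$ with real-valued marks; its only quantitative hypothesis is \eqref{conv_speed}. Your argument, however, closes the decisive ``threshold-gap'' step by invoking Condition \ref{RFmodel}: independence of marks and locations, the minimum inter-point spacing $d_0$, and the resulting $m$-dependence are all used to get a conditional variance of order $T\lambda_{u_T}$ for the centred exceedance count, and hence the Chebyshev/Borel--Cantelli bound. None of these are hypotheses of the lemma (they are only assumed later, in Theorem \ref{CLT_MPP} where the lemma is applied), so at best you prove a special case. Moreover, the reduction that precedes this step is not as innocuous as presented: in the sandwich $\hat\alpha_{f_{\condit,u_{T_{k+1}}}}(I,\Phi,T_k)\le\hat\alpha_{f_{\condit,u_T}}(I,\Phi,T)\le\hat\alpha_{f_{\condit,u_{T_k}}}(I,\Phi,T_{k+1})$ the threshold is fixed \emph{within} a block but still varies with $k$, so you need the relative error in the ergodic theorem at scale $T_k$ to be $o(\lambda_{u_{T_k}})$ \emph{uniformly along the grid}; when $\lambda_{u_T}\to0$ the plain ergodic theorem (Proposition \ref{cor_averaging2}) gives no such rate, which is precisely why the grid argument alone cannot finish and you fall back on Condition \ref{RFmodel}. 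Finally, the claim that \eqref{conv_speed} ``forces $T\lambda_{u_T}\to\infty$'' is not a valid deduction in general (e.g.\ for very regular ground processes the fluctuation of $T^{-1}\hat\alpha_1$ can be $O(T^{-1})$), so the Borel--Cantelli step is not secured even under your extra assumptions.

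The paper's own proof avoids marks--locations independence entirely by complementation: writing $g_u=1-f_{\condit,u}$, it applies the individual ergodic theorem to the complementary count $\hat\alpha_{g_{u_T}}(I,\Phi,T)$, whose normaliser $\Eb_\Phi\hat\alpha_{g_{u_T}}(I,\Phi,1)=\lambda-\lambda_{u_T}$ stays bounded away from zero, and then expresses
\begin{align*}
\frac{\hat\alpha_{f_{\condit,u_T}}(I,\Phi,T)}{T\lambda_{u_T}}
=\frac{\hat\alpha_{1}(I,\Phi,T)-\hat\alpha_{g_{u_T}}(I,\Phi,T)}{T\lambda_{u_T}},
\end{align*}
using \eqref{conv_speed} to control the fluctuation of the total count relative to the small scale $\lambda_{u_T}$. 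If you want to keep your route, you must either add Condition \ref{RFmodel} to the hypotheses (weakening the lemma) or replace the mark-part estimate by an argument that, like the paper's, only uses \eqref{conv_speed} and the ergodic theorem.
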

Note that the almost sure convergence 
$(\lambda T)^{-1}\hat\alpha_{1}(I, \Phi,
T)\to 1$ as $T\to\infty$ follows from the classical individual
ergodic theorem (e.g., \cite[Prop.\ 12.2.II]{Daley2008}).
\begin{proof}[Proof of Lemma \ref{ergodic_theorem}]
 With $g_u(y)
= 1 - f_{\condit, u}(y)$, $y\in\R$, we obtain the almost sure
convergence 
\begin{align*}
\frac{\hat\alpha_{g_{u_T}}(I, \Phi, T)}{T \Eb_\Phi
\hat\alpha_{g_{u_T}}(I, \Phi, 1)} \to 1
\end{align*}
from \cite[Prop.\ 12.2.VII]{Daley2008} and the subsequent remarks.
Further, $\lambda  =  \Eb_\Phi \hat\alpha_{1}(I, \Phi, 1) = 
\Eb_\Phi \hat\alpha_{f_{\condit, {u_T}}}(I, \Phi, 1) + \Eb_\Phi
\hat\alpha_{g_{u_T}}(I, \Phi, 1)$.  Hence,
\begin{align*}
\frac{\hat\alpha_{f_{\condit, {u_T}}}(I, \Phi, T)}{
  T \Eb_{\Phi}\hat\alpha_{f_{\condit, {u_T}}}(I, \Phi, 1)}
&= \frac{\hat\alpha_{1}(I, \Phi, T) - \hat\alpha_{g_{u_T}}(I, \Phi, T)}{
T \Eb_{\Phi}\hat\alpha_{f_{\condit, {u_T}}}(I, \Phi, 1)} \\
&= \frac{  \lambda \frac{\hat\alpha_{1}(I, \Phi, T)}{\lambda T} - 
   \Eb_\Phi\hat\alpha_{g_{u_T}}(I, \Phi, 1)\frac{\hat\alpha_{g_{u_T}}(I, \Phi, T)}{
     T \Eb_\Phi\hat\alpha_{g_{u_T}}(I, \Phi, 1)}}{
 \Eb_{\Phi}\hat\alpha_{f_{\condit, {u_T}}}(I, \Phi, 1)} 
\end{align*}
and the RHS converges to 1 as long as $\Eb_{\Phi}\hat\alpha_{1,
  f_{\condit, {u_T}}}(I, \Phi, 1)$ converges to 0 at a slower rate
(in the sense of \eqref{conv_speed}) than $\frac{\hat\alpha_{1}(I,
  \Phi, T)}{\lambda T}$ and $\frac{\hat\alpha_{g_{u_T}}(I, \Phi,
  T)}{T \Eb_\Phi\hat\alpha_{g_{u_T}}(I, \Phi, 1)}$ approach 1.
\end{proof}

\begin{proof}[Proof of Theorem\ \protect{\ref{CLT_MPP}}]
We have
\begin{align*}
\frac{\hat\alpha_{f_{u_T}}^*(I, \Phi, T)}{\sqrt{\hat\alpha_{f_{\condit,u_T}}(I, \Phi, T)}}
= \frac{\hat\alpha_{f_{u_T}}^*(I, \Phi, T)}{\sqrt{[\lambda_{u_T} T]}} 
  \frac{\sqrt{[\lambda_{u_T} T]}}{\sqrt{\lambda_{u_T} T}} 
  \frac{\sqrt{\lambda_{u_T} T}}{\sqrt{\hat\alpha_{f_{\condit,u_T}}(I, \Phi, T)}}
\end{align*}
and by Lemma \ref{ergodic_theorem}, the last factor converges to 1. (Here, for $a\geq 0$, $[a]$ denotes
the smallest integer $\geq a$.) Hence, for convergence of
the LHS it is sufficient to show that  
$\hat\alpha_{f_{u_T}}^*(I, \Phi, T)/\sqrt{[\lambda_{u_T} T]}$ 
converges to a Gaussian
variable.  
According to \cite[Lemma~2.1, Lemma~2.3]{Kallenberg1983}, we can write
$\Phi$ as a sum of Dirac measures $\delta_{(T_i, Y_i)}$, $i\in\Nb$,
with random vectors $(T_i, Y_i)$ and $T_1\leq T_2 \leq \ldots$ If only
a finite observation window $[0, T]$ is considered, the number of
summands $N(T)$ is also finite but random.
Then we introduce a modified version of
$\hat\alpha_{f_u}^*(I, \Phi, T)$, in which the sum is cut
after a fixed number $N_{\max}\in\Nb$ of terms:
\begin{align*}
  \hat\alpha_{f_u}^{*, N_{\max}}(I, \Phi, T) 
  &=\sum_{i=1}^{N(T)} \sum_{j=1}^{N(T)}  
    \Bigl( f_u(Y_i) - \mu_{f_u, f_{\condit, u}}^{(2)}(I) \Bigr)
    \cdot f_{\condit, u}(Y_i)\cdot\1_{T_j - T_i \in I}\\
   &\qquad  \cdot\1_{\left[\sum_{i'=1}^{i-1} \sum_{j'=1}^{N(T)}
       f_{\condit, u}(Y_i)\1_{T_{j'} - T_{i'} \in I} +    
       \sum_{j'=1}^{j}
       f_{\condit, u}(Y_i)\1_{T_{j'} - T_{i} \in I} \leq N_{\max}\right]}.
\end{align*}
Then we have
\begin{align}
  \frac{\hat\alpha_{f_{u_T}}^*(I, \Phi, T)}{\sqrt{[\lambda_{u_T} T]}} 
  &= \frac{\hat\alpha_{f_{u_T}}^{*, [\lambda_{u_T} T]}(I, \Phi, \infty)}{\sqrt{[\lambda_{u_T} T]}} 
+ \frac{\hat\alpha_{f_{u_T}}^*(I, \Phi, T) - 
    \hat\alpha_{f_{u_T}}^{*, [\lambda_{u_T} T]}(I, \Phi, \infty)
  }{\sqrt{[\lambda_{u_T} T]}}  \label{CLT_MPP_proof_1} 
\end{align}
and the first summand of the RHS contains a non-random number of
summands (namely $[\lambda_{u_T} T]$).  By the minimum distance
assumption in condition ($m$-dependent Random Field Model), each mark
$Y_i$ occurs at most $|I|/d_0$ times in  
$\hat\alpha_{f_{u_T}}^{*, [\lambda_{u_T} T]}(I, \Phi, \infty)$. By the
finite-range assumption on the covariance function of
the underlying random field, the sequence $(Y_i)_{i\in \Nb}$ is
$[h_0/d_0]$-dependent.  Hence, the sequence of summands in
$\hat\alpha_{f_{u_T}}^{*, [\lambda_{u_T} T]}(\Phi,
I, \infty)$ is $[|I|h_0 / d_0^2]$-dependent.  
By assumption, the first four  moments of the excesses 
$Z_i = [f_{u_T}(Y_i) \,|\, f(Y_i) > u_T]$ exist and converge to some constant in
$(0, \infty)$ as $T\to\infty$.  Then the sequence of summands in
$\hat\alpha_{f_{u_T}}^{*, [\lambda_{u_T} T]}(\Phi,
I, \infty)$ satisfies the assumptions of Berk's
 CLT for triangular arrays of $m$-dependent
random variables \cite{Berk1973} and thus, for $T\to\infty$, 
$\hat\alpha_{f_{u_T}}^{*, [\lambda_{u_T} T]}(I, \Phi,
\infty)/\sqrt{[\lambda_{u_T} T]}$ approaches a Gaussian distribution
with zero mean and variance 
\begin{align*}
  u_\infty = \lim_{T\to\infty} \Var \left[ 
    \hat\alpha_{f_{u_T}}^{*, [\lambda_{u_T} T]}(I, \Phi, \infty)\right]  \big/ ([\lambda_{u_T} T]).
\end{align*}

Next, we show that the second summand in (\ref{CLT_MPP_proof_1}) 
converges to 0 in probability.  We use the notation 
$\Delta \alpha_{f_u} =
\hat\alpha_{f_u}^{*}(I, \Phi, T) - \hat\alpha_{f_u}^{*, [\lambda_{u_T} T]}(I, \Phi, \infty)$
and 
$\Delta \alpha_{1} =
\hat\alpha_{f_{\condit, u}}(I, \Phi, T) - \hat\alpha_{f_{\condit, u}}^{[\lambda_{u_T} T]}(I, \Phi, \infty)$
and consider
\begin{align}
  &\P( |\Delta \alpha_{f_{u_T}}|  \geq \varepsilon \sqrt{[\lambda_{u_T} T]} )  \notag\\
  &= \P\Bigl(|\Delta \alpha_{f_{u_T}}|  \geq \varepsilon \sqrt{[\lambda_{u_T} T]} \,\Big|\, 
  |\Delta \alpha_{1}| \geq \varepsilon [\lambda_{u_T} T] \Bigr)
  \cdot \P\bigl(|\Delta \alpha_{1}| \geq \varepsilon [\lambda_{u_T} T] \bigr) \notag\\
  &+ \P\Bigl(|\Delta \alpha_{f_{u_T}}|  \geq \varepsilon \sqrt{[\lambda_{u_T} T]} \,\Big|\, 
  |\Delta \alpha_{1}| < \varepsilon [\lambda_{u_T} T] \Bigr)
  \cdot \P\bigl(|\Delta \alpha_{1}| < \varepsilon [\lambda_{u_T} T] \bigr)\notag\\
  &\leq \P\bigl(|\Delta \alpha_{1}| \geq \varepsilon [\lambda_{u_T} T] \bigr)
  + \P\Bigl(|\Delta \alpha_{f_{u_T}}|  \geq \varepsilon \sqrt{[\lambda_{u_T} T]} \,\Big|\, 
  |\Delta \alpha_{1}| < \varepsilon [\lambda_{u_T} T] \Bigr) \label{CLT_MPP_proof_2}
\end{align}
Note that $\hat\alpha_{f_{\condit, {u_T}}}^{ [\lambda_{u_T} T]}(I, \Phi, \infty)= [\lambda_{u_T} T]$ and hence
\begin{align}
  \P\bigl(|\Delta  \alpha_{1}| \geq \varepsilon [\lambda_{u_T} T] \bigr)
  = \P\left(\left|\hat\alpha_{f_{\condit, {u_T}}}(I, \Phi, T)\big/[\lambda_{u_T} T] -1\right| 
  \geq \varepsilon \right)
  \rightarrow 0 \quad \text{for } T\rightarrow \infty.
  \label{conv_delta_s0}
\end{align}
To estimate the the last summand in (\ref{CLT_MPP_proof_2}), we use
again that the sequence $(Y_i)_{i\in\Nb}$ is $[h_0/d_0]$-dependent and
that the number of points in any interval of length $|I|$ is bounded
by $c=|I|/d_0$. This means that each term $f_{u_T}(Y_i)$ occurs at most 
$c$ times in the sum $\Delta
\alpha_{f_{u_T}}$.  Obviously, the variance of $\Delta
\alpha_{f_{u_T}}$, or more generally all even
centered moments of $\Delta \alpha_{f_{u_T}}$,
become maximal, if this boundary is bailed, i.e., if for a given total
number $\Delta \alpha_{1}$ of summands, only
$\left[\Delta \alpha_{1}/c\right]$
different $Y_i$ are involved.  With $Z_i^* = Z_i - \Eb Z_i = [f_{u_T}(Y_i) \,\big|\, 
f(Y_i) > {u_T}] - e({u_T})$, where $e(u) = \Eb
\left[f_{u}(Y(0)) \,\big|\, f(Y(0)) > u\right]$, we get
\begin{align*}
  &\P\bigl(|\Delta \alpha_{f_{u_T}}|  \geq \varepsilon \sqrt{[\lambda_{u_T} T]} \,\big|\, 
  |\Delta \alpha_{1}| < \varepsilon [\lambda_{u_T} T] \bigr) \\
  &=\P\left(|\Delta \alpha_{f_{u_T}}|^4  \geq \varepsilon^4 [\lambda_{u_T} T]^2 \,\Big|\, 
  |\Delta \alpha_{1}| < \varepsilon [\lambda_{u_T} T] \right) 
  \\\displaybreak[0]  &\leq \textstyle
  \P\left(\left|\sum_{i=1}^{[\varepsilon [\lambda_{u_T} T] c^{-1}]} cZ_i^*\right|^4 
  \geq \varepsilon^4 [\lambda_{u_T} T]^2 \right)\\ 
  &\leq  c^4 
  \sum_{i,j,k,l=1}^{[\varepsilon [\lambda_{u_T} T] c^{-1}]}
  \Eb(Z_i^* Z_j^* Z_k^*Z_l^*) 
  \cdot (\varepsilon^4[\lambda_{u_T} T]^2)^{-1}\\
  &\leq  c^4 \cdot
  [\varepsilon [\lambda_{u_T} T] c^{-1}] 
  \cdot \left(\frac{h_0}{d_0}\right)^3 \Eb\left[(Z_1^*)^4\right]
  \cdot (\varepsilon^4[\lambda_{u_T} T]^2)^{-1}\\
  &= (\lambda_{u_T} T)^{-1} \varepsilon^{-3} \left(c\frac{h_0}{d_0}\right)^3 
  \Eb\left[(Z_1^*)^4\right] (1+o(1)) \longrightarrow 0, \quad (T\to\infty).
\end{align*}
Plugging this and (\ref{conv_delta_s0}) into (\ref{CLT_MPP_proof_2})
yields that $\Delta \alpha_{f_{u_T}}/\sqrt{[\lambda_{u_T} T]} \rightarrow 0$ in probability.
\end{proof}

\section{Proofs of Examples in Section \ref{sec:estimators}}\label{app_proof_ex}

\begin{proof}[Proof of Example \ref{prop_toyexample1}]
For $|I|$ and $\T$ large, we have $\hat\alpha_{1}(I, \Phi, \T) \sim
N\cdot N |I|/v_\T$ and each distinct summand in $\hat\alpha_{f}(I, \Phi, \T)$ 
occurs $N |I| / v_\T \sim \hat\alpha_{1}(I, \Phi, \T)/N$ times. Thus,
$\hat\alpha_{f}(I, \Phi, \T)
  \sim \hat\alpha_{1}(I, \Phi, \T) \sum_{i=1}^N f(y_i)/N$ and 
$\Var[\hat\alpha_{f}(I, \Phi, \T) | \A_n^*]\sim \hat\alpha_{1}(I, \Phi, \T)^2 v / N$ 
\end{proof}

\begin{proof}[Proof of Example \ref{toy_ex_2}]
We have
\begin{align*}
\Eb &\bigl[ \hat\alpha_{f}(I, \Phi, T)/ \hat\alpha_{1}(I, \Phi, T) \,\big|\, \A_n^* \bigr]\notag \\[.5em]
&= \hat\alpha_{1}(I, \Phi, T)^{-1} \cdot \textstyle
   \Eb \left[\left.\sum_{(t_1, y_1, z_1), (t_2, y_2, z_2) \in\Phi,\ t_1\in[0,\,T]} 
    z_1 f(y_1)\cdot\1_{t_2 - t_1 \in I} 
    \right| 
    \A_n^* \right]\notag \\[.5em]
&= \hat\alpha_{1}(I, \Phi, T)^{-1} \cdot \textstyle 
   \sum_{t_1 \in \Phi_{\g}\cap[0,\,T]}   
   \cdot \#\{t_2\in\Phi_{\g} : t_2-t_1 \in I\} \cdot 
   \Eb \left[ f(Y(t_1))  | \A_n^* \right] \notag\\[.5em]
&= \Eb f(Y(0)). 
   %\label{cond_mean_s1_s0}
\end{align*}
and 
\begin{align*}
  &\Eb \bigl[ \hat\alpha_{f}(I, \Phi, T)^2 \,|\, \A_n^* \bigr] \notag\\
  &=\Eb \Big[ 
    \sum_{t_1, s_1 \in \Phi_{\g}\cap[0,\,T]} 
    f(Y(t_1) f(Y(s_1)) \notag\\[-1em]
    & \hspace*{10em}
    \cdot \#\{t_2\in\Phi_{\g} : t_2-t_1 \in I\} 
    \cdot \#\{s_2\in\Phi_{\g} : s_2-s_1 \in I\} \,\Big|\, \A_n^* \Big] 
  \notag\\
&=  \sum_{t_1, s_1 \in \Phi_{\g}\cap[0,\,T]}  
    n(t_1, \Phi_{\g}, I) n(s_1, \Phi_{\g}, I)
    \notag\\[-1em]
    & \hspace*{10em}
    \cdot \Eb \bigl[ f(Y(t_1) f(Y(s_1)) \,|\, \A_n^* \bigr] 
    \displaybreak[0]\notag\\ 
&=  \sum_{t_1, s_1 \in \Phi_{\g}\cap[0,\,T]}  
    n(t_1, \Phi_{\g}, I) n(s_1, \Phi_{\g}, I)
    \notag\\[-1em]
    & \hspace*{10em}
    \cdot \Bigl[ \Eb \left[ f(Y(0)) | \A_n^* \right]^2 
      + \Cov \bigl[ f(Y(t_1), f(Y(s_1)) \,|\, \A_n^* \bigr] \Bigr]
    \notag\\
&=  \sum_{t_1, s_1 \in \Phi_{\g}\cap[0,\,T]}  
    n(t_1, \Phi_{\g}, I) n(s_1, \Phi_{\g}, I)
    \cdot \Cov \bigl[ f(Y(t_1), f(Y(s_1)) \bigr] \notag\\
    & \qquad + (\Eb f(Y(0)))^2 \hat\alpha_{1}(I, \Phi, T)^{2} .
\end{align*}
Hence,
\begin{align*}
  \Var \bigl[ &\hat\alpha_{f}(I, \Phi, T)/\hat\alpha_{1}(I, \Phi, T) \,|\, \A_n^* \bigr]\\  
  &=  \Eb \bigl[ (\hat\alpha_{f}(I, \Phi, T)/\hat\alpha_{1}(I, \Phi, T)^2 \,|\, \A_n^* \bigr]
  - \bigl(\Eb [\hat\alpha_{f}(I, \Phi, T)/\hat\alpha_{1}(I, \Phi, T) \,|\, \A_n^*]\bigr)^2   
  \notag\\
  &= \hat\alpha_{1}(I, \Phi, T)^{-2} \cdot \Eb \bigl[ \hat\alpha_{f}(I, \Phi, T)^2 \,|\, 
    \A_n^* \bigr] - 
  (\Eb f(Y(0)))^2\notag\\
  & = \hat\alpha_{1}(I, \Phi, T)^{-2}
    \sum_{t_1, s_1 \in \Phi_{\g}\cap[0,\,T]}  
    n(t_1, \Phi_{\g}, I) n(s_1, \Phi_{\g}, I)
    \cdot \Cov \bigl[ f(Y(t_1), f(Y(s_1)) \bigr].
\end{align*}
\end{proof}

\acks
The authors are indebted to Katrin Meyer for pointing their attention 
to the example in \cite{Begon1990}. A. Malinowski has been financially
supported the German Science Foundation (DFG), Research Training Group
1644 `Scaling problems in Statistics'. 

\bibliographystyle{apt} 
\bibliography{bib,../diplomarbeit/mybibtex}

\end{document}